\DeclarePairedDelimiter{\ceil}{\lceil}{\rceil}
\definecolor{1}{rgb}{1,0.2,0.3}
\definecolor{2}{rgb}{0.1,0.3,0.5}
\definecolor{3}{rgb}{1,1,0}
\definecolor{4}{rgb}{255,255,255}
\newtheorem{theorem}{Theorem}[section]
\newtheorem{corollary}{Corollary}[theorem]
\newtheorem{lemma}[theorem]{Lemma}
\newtheorem{conjecture}[theorem]{Conjecture}
\theoremstyle{definition}
\theoremstyle{remark}
\begin{document}

\tikzset
{
  x=.23in,
  y=.23in,
}

\title{Polyominoes with maximally many holes\\}
\author{Matthew Kahle and \'Erika Rold\'an}
\date{\today}
\maketitle

\begin{abstract}
What is the maximum number of holes that a polyomino with $n$ tiles can enclose? Call this number $f(n)$. We show that if $n_k = \left( 2^{2k+1} + 3 \cdot 2^{k+1}+4 \right) / 3$ and $h_k =  \left( 2^{2k}-1 \right) /3$, then $f(n_k) = h_k$ for $k \ge 1$.
We also give nearly matching upper and lower bounds for large $n$, showing as a corollary that $f(n) \approx n/2$.
This paper is dedicated to the memory of Solomon W. Golomb.
\end{abstract}
\section{Polyominoes.}
\emph{Polyominoes}, first studied systematically by Golomb \cite{golomb1954checker}, are shapes that can be made by gluing together finitely many unit squares, edge to edge. A polyomino with $n$ squares is sometimes called an \emph{$n$-omino}.

Tiling problems involving polyominoes are well studied---see, for example, \cite{MR1291821} or \cite{boltjanskij1991geometric}. In tiling problems, one almost always restricts to simply-connected polyominoes, i.e.,\ polyominoes without holes. Our main interest here is maximizing the number of holes. Figure \ref{Fonehole} illustrates an $8$-omino, a $20$-omino, and a $60$-omino with $1$, $5$, and $21$ holes respectively.

To be precise about the topology, we consider the tiles to be closed unit squares in the plane. Polyominoes are finite unions of these closed squares, so they are compact. The holes of a polyomino are defined to be the bounded connected components of its complement in the plane.

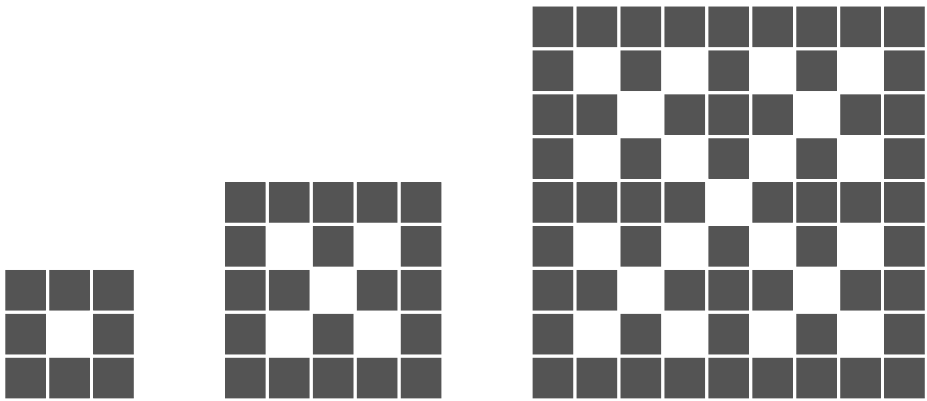
\begin{figure}[ht] 
\begin{tikzpicture} 
\foreach \x/\y in {0 / 0, 0 / 1, 1 / 0, 2 / 0, 2 / 1, 0 / 2, 1 / 2, 2 / 2 } { 
\path [draw=gray!66!black, fill=gray!66!black] (.5+\x-0.45, .5+\y-0.45)
-- ++(0,.9)
-- ++(.9,0)
-- ++(0,-.9)
--cycle;
}

\foreach \x/\y in {0 / 0, 0 / 1, 1 / 0, 2 / 0, 2 / 1, 0 / 2, 1 / 2, 4 / 0, 4 / 1, 3 / 0, 4 / 2, 3 / 2, 0 / 4, 0 / 3, 1 / 4, 2 / 4, 2 / 3, 4 / 4, 4 / 3, 3 / 4 } { 
\path [draw=gray!66!black, fill=gray!66!black] (2+3.5+\x-0.45, .5+\y-0.45)
-- ++(0,.9)
-- ++(.9,0)
-- ++(0,-.9)
--cycle;
}

\foreach \x/\y in {0 / 0, 0 / 1, 1 / 0, 2 / 0, 2 / 1, 0 / 2, 1 / 2, 4 / 0, 4 / 1, 3 / 0, 4 / 2, 3 / 2, 0 / 4, 0 / 3, 1 / 4, 2 / 4, 2 / 3, 4 / 3, 3 / 4, 8 / 0, 8 / 1, 7 / 0, 6 / 0, 6 / 1, 8 / 2, 7 / 2, 5 / 0, 5 / 2, 8 / 4, 8 / 3, 7 / 4, 6 / 4, 6 / 3, 5 / 4, 0 / 8, 0 / 7, 1 / 8, 2 / 8, 2 / 7, 0 / 6, 1 / 6, 4 / 8, 4 / 7, 3 / 8, 4 / 6, 3 / 6, 0 / 5, 2 / 5, 4 / 5, 8 / 8, 8 / 7, 7 / 8, 6 / 8, 6 / 7, 8 / 6, 7 / 6, 5 / 8, 5 / 6, 8 / 5, 6 / 5 } { 
\path [draw=gray!66!black, fill=gray!66!black] (4+ 8.5+\x-0.45, .5+\y-0.45)
-- ++(0,.9)
-- ++(.9,0)
-- ++(0,-.9)
--cycle;
}

 \end{tikzpicture}

\caption{Polyominoes with holes.}
\label{Fonehole}
\end{figure}

\subsection{Counting Holes.}
We consider two $n$-ominoes to be equivalent if they agree after a translation. We denote the set of all $n$-ominoes by $\mathcal{A}_{n}$.
Given a polyomino $A$, we denote by $h(A)$ the number of holes in $A$. For $n\ge 1$ define,
\begin{equation}
f(n):= \max_{A\in \mathcal{A}_n}h(A).
\end{equation}

Other notions of polyomino equivalence have been studied, especially in the context of polyomino enumeration. For example, one might consider two polyominoes to be the same if they agree after rotation or reflection. However, the number of holes is invariant under rigid motions, so for our purposes, all that matters is that $\mathcal{A}_n$ is a finite set so the maximum $f(n)$ is well defined. This maximum is the same under any of these notions of equivalence.

Similarly, define $g(m)$ to be the minimum number $N$ such that there exists an $N$-omino with $m$ holes. We check below that $g$ is a right inverse of $f$.

The function $g$ is listed at The On-Line Encyclopedia Of Integer Sequence as sequence A118797. 
Tom\'as Oliveira e Silva enumerated free polyominoes according  to area and number of holes, up to $n=28$, which at the time of this writing seems to be the state of the art. As a corollary of his calculations, we know $f(n)$ for $n \le 28$ and $g(m)$ for $m \le 8$. \cite{bworld}. See Table 1.

\begin{table}
\centering
\begin{tabular}{l | l | l | l | l | l | l | l | l}
m & 1 & 2  & 3  & 4  & 5  & 6  & 7  & 8  \\ \hline
g(m) & 7 & 11 & 14 & 17 & 19 & 23 & 25 & 28 \\
\end{tabular}
\caption{$g(m)$ for $m \le 8$.}
\end{table}

\subsection{Statement of main results.}

Here and throughout, 
\begin{equation}\label{Defn}
n_k=\frac{1}{3} \left( 2^{2k+1}+3 \cdot 2^{k+1}+4 \right),
\end{equation}
and 
\begin{equation}\label{Defh}
h_k=\frac{1}{3} \left(2^{2k}-1 \right).
\end{equation}

\begin{theorem}\label{Ttremebundoexactly}
For $k \ge 1$ we have 
$f(n_k)=h_k$. Moreover,
$f(n_k - 1)=h_k$ and $f(n_k -2) =h_k-1$.
\end{theorem}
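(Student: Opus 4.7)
My plan is to prove the theorem by combining an explicit recursive construction (for the lower bounds) with a combinatorial-topological upper bound.

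\textbf{Construction (lower bound $f(n_k) \ge h_k$).} Define $P_k$ recursively. Let $P_1$ be the $3 \times 3$ square minus its central cell, so $n_1 = 8$ and $h_1 = 1$. Given $P_{k-1}$ in its $(2^{k-1}+1) \times (2^{k-1}+1)$ bounding square, form $P_k$ by placing four translates of $P_{k-1}$ at the corners of a $(2^k+1) \times (2^k+1)$ square so that adjacent copies overlap in the middle row or column, and then delete the central cell. Inclusion-exclusion gives the recursions $h_k = 4 h_{k-1} + 1$ and $n_k = 4 n_{k-1} - 4(2^{k-1}+1)$, which unfold to \eqref{Defn} and \eqref{Defh}; equivalently $n_k + h_k = (2^k+1)^2$, so the bounding box is tiled completely by the cells of $P_k$ together with its $h_k$ unit-cell holes.

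\textbf{Lower bounds on $f(n_k - 1)$ and $f(n_k - 2)$.} Each of the four corner cells of the bounding box of $P_k$ has only two neighbors in $P_k$ and borders no hole, so removing one corner yields a connected $(n_k - 1)$-omino still with $h_k$ holes. To obtain $f(n_k - 2) \ge h_k - 1$, first delete from $P_k$ a cell bordering exactly one hole (destroying only that hole while keeping the polyomino connected), and then remove a corner cell.

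\textbf{Upper bounds.} For $f(n_k) \le h_k$ and $f(n_k - 2) \le h_k - 1$, I would apply Euler's formula to the polyomino as a planar $2$-complex. If the polyomino has $n$ cells, $h$ holes, and total perimeter $p$, then $V - E + n = 1 - h$ with $E = 2n + p/2$. Classifying vertices by the number of cells meeting at them, and separating out the diagonal pinch vertices $V_2^{\mathrm{diag}}$ where two cells of the polyomino meet only at a corner, yields
\[
V_4 \;=\; n - \frac{p}{2} + 1 - h + V_2^{\mathrm{diag}},
\]
so $p \le 2(n + 1 - h) + 2\, V_2^{\mathrm{diag}}$. Combined with a sharp isoperimetric lower bound on $p$ coming from the outer boundary and the $h$ hole boundaries, this should force $h \le h_k$ when $n \le n_k$ and $h \le h_k - 1$ when $n \le n_k - 2$.

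\textbf{Main obstacle.} Sharpness of the upper bound is the principal difficulty. The naive inequalities $p \le 2n + 2$ and $p \ge 4(h + 1)$ only give $h \le (n - 1)/2$, asymptotically right but too weak to distinguish $n_k$ from $n_k - 2$. Pinning down the exact extremal values will require exploiting the interaction between the bounding box, the pinch-vertex count, and the hole perimeters simultaneously, likely via an induction on $k$ that mirrors the recursive doubling in the construction of $P_k$.
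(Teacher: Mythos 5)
Your lower-bound half is essentially the paper's: the same recursive quadrupling construction (the paper uses four rotated copies about the top-right tile rather than translates, but the resulting counts $h_k=4h_{k-1}+1$, $n_k=4n_{k-1}-4(2^{k-1}+1)$ and the key fact $n_k+h_k=(2^k+1)^2$ are identical), and deleting a corner tile gives $f(n_k-1)\ge h_k$ just as the paper deletes the upper-left tile. The problem is the upper bound, and you say so yourself: "this should force $h\le h_k$" and "sharpness \dots is the principal difficulty" is an acknowledgment that the decisive step is missing, so as it stands the proposal does not prove the theorem.

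The missing idea is a concrete and short one, and it is not the induction on $k$ you anticipate. Write $p(A)=p_o(A)+p_h(A)$ for the outer and hole perimeters of an $n$-omino $A$ with $h$ holes. Three facts combine: (i) $p(A)\le 2n+2$, from the spanning-tree bound $b(A)\ge n-1$ on shared edges (your Euler/pinch-vertex inequality is a variant of this and the refinement by $V_2^{\mathrm{diag}}$ is not needed); (ii) $p_h(A)\ge 4h$; and (iii) the isoperimetric input you gesture at but never pin down: filling in the holes produces a polyomino with at least $n+h$ tiles whose perimeter is exactly $p_o(A)$, so by the Harary--Harborth formula $p_o(A)\ge p_{\min}(n+h)=2\left\lceil 2\sqrt{n+h}\right\rceil$. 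Together these give the self-referential bound $h\le \frac12 n+\frac12-\frac12\left\lceil 2\sqrt{n+h}\right\rceil$, into which one may substitute any already-known lower bound on $f(n)$ (by monotonicity of $p_{\min}$). Feeding in your own construction's bound $f(n_k)\ge h_k$ and using $n_k+h_k=(2^k+1)^2$ — an identity you noticed but never exploited — the right-hand side evaluates exactly to $h_k+\frac12$, so $f(n_k)=h_k$ by integrality; and assuming $f(n_k-2)=h_k$ the same bound at $n_k-2$ gives $h_k\le h_k-\frac12$, a contradiction, after which Lemma \ref{Ljumpsoff} (or your two-tile deletion, which still needs a connectivity/hole-destruction check) yields $f(n_k-2)=h_k-1$. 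In short: the crude $p\ge 4(h+1)$ you reject must be replaced by the $2\lceil 2\sqrt{n+h}\rceil$ outer-perimeter bound plus a bootstrap with the constructed lower bound; without that, your plan has no route to distinguishing $n_k$ from $n_k-2$, and the "induction mirroring the construction" you propose is neither carried out nor necessary.
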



\begin{corollary}\label{aboutexactg}
$g(h_k) = n_k-1$ for all $k \ge 1$.\\
\end{corollary}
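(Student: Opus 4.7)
The plan is to deduce the corollary directly from Theorem \ref{Ttremebundoexactly}, using only the easy monotonicity $f(N) \le f(N+1)$.

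For the upper bound $g(h_k) \le n_k - 1$, the equality $f(n_k - 1) = h_k$ produces an $(n_k - 1)$-omino with $h_k$ holes. Since $g(h_k)$ is by definition the minimum number of tiles among polyominoes with $h_k$ holes, this immediately gives $g(h_k) \le n_k - 1$.

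For the lower bound, I first verify that $f$ is non-decreasing. Given any $N$-omino $A$, the unbounded component $U$ of $\mathbb{R}^2 \setminus A$ has non-empty boundary (because $A$ is compact), so there is at least one edge of $A$ lying on $\partial U$. Attaching a new unit square $S$ along such an edge produces a connected $(N+1)$-omino $A \cup S$. The interior of $S$ sits in $U$ and is therefore disjoint from every bounded component $H$ of $\mathbb{R}^2 \setminus A$; a quick check shows that each such $H$ persists as a bounded component of $\mathbb{R}^2 \setminus (A \cup S)$. Hence $f(N+1) \ge h(A \cup S) \ge h(A)$, and taking the maximum over $A$ yields $f(N) \le f(N+1)$.

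With monotonicity in hand, the equality $f(n_k - 2) = h_k - 1$ from Theorem \ref{Ttremebundoexactly} propagates downward: $f(N) \le h_k - 1 < h_k$ for every $N \le n_k - 2$. Consequently no polyomino with fewer than $n_k - 1$ tiles can have $h_k$ holes, so $g(h_k) \ge n_k - 1$, matching the upper bound. The only point requiring a moment of thought is insisting that the extra square be attached along an exterior edge of $A$; this guarantees no existing hole is destroyed or merged, and the corollary then follows by direct comparison of the three equalities in the theorem.
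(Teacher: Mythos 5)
Your proof is correct and follows essentially the paper's intended route: the paper derives the corollary from the values $f(n_k-1)=h_k$ and $f(n_k-2)=h_k-1$ in Theorem \ref{Ttremebundoexactly} via Lemma \ref{Lfandg} (``$g(m)=n$ if and only if $f(n)=m$ and $f(n-1)=m-1$''), whose relevant direction you simply reprove inline using the monotonicity of $f$ (itself noted at the start of Section \ref{Spreliminaryr}). So the only difference is that you re-derive, rather than cite, machinery the paper already provides.
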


We also give bounds for large $n$.

\begin{theorem}\label{Ttremebundo}
Let $f(n)$ denote the maximum number of holes that a polyomino with $n$ squares can have. Given $C_{1}>\sqrt{5 / 2}$ and $C_{2}<\sqrt{3 / 2}$, there exists an $n_0=n_0(C_1,C_2)$ such that 
\begin{equation*}
\frac{1}{2}n-C_{1}\sqrt{n} \leq f(n) \leq \frac{1}{2}n-C_{2}\sqrt{n},
\end{equation*}
for $n>n_0$.
\end{theorem}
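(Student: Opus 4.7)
The plan is to treat the upper and lower bounds separately, in both cases comparing to Theorem~\ref{Ttremebundoexactly}.

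For the upper bound, suppose $P$ is an $n$-omino with $h$ holes, and let $\tilde P$ be the simply connected polyomino obtained from $P$ by filling in the holes; its area is $n+H$, where $H\ge h$ is the total hole area. The discrete isoperimetric inequality for simply connected polyominoes gives $p(\tilde P)\ge 4\sqrt{n+h}$, and since $p(\tilde P)=p_{\text{outer}}(P)$ and every hole contributes at least $4$ to $p_{\text{inner}}(P)$, we obtain $p(P)\ge 4\sqrt{n+h}+4h$. On the other hand, $P$ is connected, so its tile-adjacency graph has at least $n-1$ internal edges, whence $p(P)=4n-2e_{\text{int}}(P)\le 2n+2$. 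Combining these estimates gives
\[
2n+2\ge 4\sqrt{n+h}+4h,
\]
and solving the resulting quadratic in $\sqrt{n+h}$ yields $h\le (n+2-\sqrt{6n+3})/2=n/2-\sqrt{3/2}\sqrt n+O(1)$. For any $C_2<\sqrt{3/2}$, this lies below $n/2-C_2\sqrt n$ once $n$ is sufficiently large.

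For the lower bound, let $P_k$ denote the extremal polyomino produced by Theorem~\ref{Ttremebundoexactly}, and note the identities $n_k/2-h_k=2^k+1$ and $\sqrt{n_k}\sim 2^k\sqrt{2/3}$ (direct from \eqref{Defn} and \eqref{Defh}); thus at the special sizes $n=n_k$ the ``defect'' $n/2-f(n)$ is already $(\sqrt{3/2}+o(1))\sqrt n$. For general $n$, choose $k$ maximal with $n_k\le n$. A key observation is that every hole of $P_j$ lies strictly inside the $(2^j+1)\times(2^j+1)$ square bounding it, so for any $j\le k$ a copy of $P_j$ can be attached to $P_k$ (or to a previously attached $P_{j'}$ with $j\le j'$) by identifying one of its boundary columns with a boundary column of the existing polyomino, saving $2^j+1$ cells at no cost in holes. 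Iterating with a greedy decomposition $n-n_k=\sum_i\bigl(n_{j_i}-(2^{j_i}+1)\bigr)$ (picking the largest available scale first), and allowing a small number of alternative ``base'' constructions such as two or three copies of $P_k$ glued side by side along shared columns, produces an $n$-omino whose defect has the form $(2^k+1)+\sum_i(2^{j_i}+1)/2$ plus a similar contribution from the base. A case analysis over $n\in[n_k,n_{k+1}]$ should bound this defect by $(\sqrt{5/2}+o(1))\sqrt n$, so any $C_1>\sqrt{5/2}$ works once $n$ is large enough.

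The main obstacle I anticipate is obtaining the precise constant $\sqrt{5/2}$ in the lower bound. The upper-bound argument is a short isoperimetric calculation, but the lower bound rests on a delicate interpolation: at values of $n$ in the middle of a range $[n_k,n_{k+1}]$ the defect-to-$\sqrt n$ ratio is sensitive to how $n$ is decomposed into sub-scales, and getting down to $\sqrt{5/2}$ rather than a larger constant such as $\sqrt 3$ (which one would get from the naive ``two copies of $P_k$ joined by a one-cell bridge'' construction) requires both the column-sharing trick and a careful choice of which scales to use. Aside from this, the needed ingredients are standard: the discrete isoperimetric inequality for simply connected polyominoes, the identity $p(P)=4n-2e_{\text{int}}(P)$, and the monotonicity $f(n+1)\ge f(n)$ obtained by appending a single tile to the boundary of an extremal polyomino.
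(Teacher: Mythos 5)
Your upper bound is correct and rests on the same perimeter bookkeeping the paper uses ($p(P)=4n-2b(P)$ with $b(P)\ge n-1$, outer perimeter at least $p_{\min}(n+h)\ge 4\sqrt{n+h}$, and at least $4$ edges per hole); the only difference is that you solve the resulting implicit inequality $2n+2\ge 4\sqrt{n+h}+4h$ as a quadratic in $\sqrt{n+h}$, whereas the paper first proves the lower bound and then feeds it into Corollary \ref{Cublb}. Your route is self-contained and gives the same constant $\sqrt{3/2}$, so that half is fine.

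The genuine gap is the lower bound, and it is exactly the part you flagged as an ``obstacle'': you never actually produce, for every large $n$, an $n$-omino with at least $n/2-C_1\sqrt n$ holes. The paper does not interpolate between the extremal polyominoes $S_k$ at all; it builds a separate family $R_k$ by tiling a $6k\times 10k$ rectangle with a $4$-tile, $2$-hole pattern (plus a bounding row, column, and $2k$ dominoes), getting $40k^2+20k$ tiles, $20k^2$ holes, and then interpolates in steps of one tile per half-hole along a domino column; the defect $n/2-h$ stays about $10k\le\sqrt{5/2}\,\sqrt n$ uniformly in $n$, which is where the constant $\sqrt{5/2}$ comes from. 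Your greedy gluing of copies of $S_j$ sharing single columns does not reach that constant at the bad values of $n$: writing $t=2^k$, for $n$ just below $n_{k+1}\approx\tfrac83 t^2$ the scheme you describe (e.g.\ three glued copies of $S_k$, defect $\approx 2t$, plus four copies of $S_{k-1}$, defect $\approx 4\cdot t/4=t$, plus lower scales) gives defect at least about $3t$, while $\sqrt{5/2}\,\sqrt n\approx 1.58\cdot 1.63\,t\approx 2.58\,t$. So the ``case analysis over $n\in[n_k,n_{k+1}]$'' you defer is not a routine verification: as sketched, the construction overshoots the target constant, and fixing it would require substantially more sharing (essentially rebuilding $S_{k+1}$ block by block) or a different construction such as the paper's. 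As it stands, the lower half of the theorem is unproved in your proposal.
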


Theorem \ref{Ttremebundoexactly} implies that the constant $\sqrt{3/2}$ in Theorem \ref{Ttremebundo} can not be improved.

\section{Preliminary Results.}\label{Spreliminaryr}


It is clearly always possible to attach a square tile to an $n$-omino and obtain an $(n+1)$-omino with at least the same number of holes. This implies that $f(n)\leq f(n+1)$ for evey $n \ge 1$, so $f$ is monotonically increasing. The following lemma tells us that $f$ never increases by more than one.

\begin{lemma}\label{Ljumpsoff}
For every $n \ge 1$, $$ f(n+1)-f(n)\leq 1.$$
\end{lemma}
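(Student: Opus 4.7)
The plan is to show that from any $(n+1)$-omino $A$ we can delete a single tile $s$ and obtain a connected $n$-omino $A' := A\setminus\{s\}$ with $h(A')\geq h(A)-1$. Applying this to an optimal $A$ with $h(A)=f(n+1)$ furnishes an $n$-omino witnessing $f(n)\geq f(n+1)-1$, which is the desired inequality.

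The setup is a topological hole-counting identity: if $A' = A\setminus\{s\}$ is connected, then
\begin{equation*}
h(A') = h(A) + 1 - r(s),
\end{equation*}
where $r(s)$ is the number of distinct connected components of $\mathbb{R}^{2}\setminus A$ that share an edge with $s$. This is because removing $s$ merges those $r(s)$ complement components, together with the newly opened interior of $s$, into a single new component, leaving other complement components unchanged. So the proof reduces to finding a non-cut tile $s$ (one whose removal preserves connectedness of $A$) with $r(s)\leq 2$.

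To locate such an $s$, I would look at convex corners of the outer boundary of $A$. Because $A$ is connected, its outer boundary is a simple closed lattice curve; a turning-number calculation (convex corners contribute $+\pi/2$ and concave corners $-\pi/2$ to the total turning of $2\pi$) shows $A$ has at least four convex corners on that curve. At any convex-corner tile $s$, two adjacent edges of $s$ are exposed to the unbounded complement component; the remaining two edges, on an adjacent pair of sides of $s$, are each either shared with a neighbor in $A$ or exposed to a hole, and cannot both be exposed (else $s$ would be isolated, contradicting connectedness of $A$ with $|A|\geq 2$). Hence $s$ has one or two neighbors in $A$ and $r(s)\leq 2$.

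The main obstacle is ensuring $A'$ is connected. If some convex-corner tile has exactly one neighbor in $A$, it is a leaf of the dual graph and removal is automatic. Otherwise every convex-corner tile has two neighbors on an adjacent pair of sides, and those two neighbors are linked by a length-two detour through the tile diagonally opposite the convex corner whenever that diagonal tile lies in $A$, making $s$ a non-cut vertex of the dual graph. The delicate technical point is showing that at least one of the (at least four) convex corners admits such a detour---or more generally is non-cut---and I would rule out the fully degenerate configuration using the global structure of $A$'s outer boundary, possibly by choosing an extremal corner (for example, the topmost-rightmost tile of $A$) and arguing via the bounding box of $A$. Having found a valid $s$, the counting formula gives $h(A')\geq h(A)-1$, completing the proof.
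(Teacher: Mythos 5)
Your reduction is sound as far as it goes: the identity $h(A\setminus\{s\}) = h(A) + 1 - r(s)$ for a non-cut tile $s$, and the observation that a convex corner of the outer boundary has $r(s)\le 2$, correctly reduce the lemma to producing a single tile that is both an outer convex corner and a non-cut vertex of the dual graph. But that existence statement is the entire difficulty of the lemma, and your proposal explicitly leaves it unproved ("the delicate technical point is showing that at least one of the \ldots convex corners \ldots is non-cut"). The fallback you sketch does not work as stated: no single prescribed extremal corner can serve, since any one of them can be a cut vertex. For instance, in the mirror-image L (a horizontal bar along the top together with a vertical bar descending from its right end), the topmost-rightmost tile has exactly two neighbors, on adjacent sides, with the diagonal tile absent, and its removal disconnects the polyomino; analogous examples kill each of the other bounding-box corners. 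So an argument must be made across several corners simultaneously, and it is not at all obvious how to rule out that every convex corner of the outer boundary is simultaneously a cut vertex (the non-leaf blocks and attachment points can in principle sit exactly at the corners, with the leaf blocks tucked against holes). You would also need to justify that the outer boundary of an edge-connected polyomino is a simple closed curve before running the turning-number count, though that is a standard fact.

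It is worth contrasting this with how the paper gets around the problem. The paper also starts from an extremal corner (the leftmost tile $l$ of the bottom row) and runs into the same obstruction, its case $C_4$: both neighbors of $l$ present, the diagonal absent, and $l$ possibly a cut vertex. But rather than searching for a different removable tile of $A$ --- which is your unproved step --- it modifies the polyomino: delete $l$ and insert a tile at the missing diagonal site. This keeps $n+1$ tiles, destroys at most one hole, and either lands in a configuration where the corner can now be deleted safely or strictly shortens the bottom row, so an induction on the length of the bottom row closes the argument. In other words, the paper never needs the claim that $A$ itself contains a deletable good tile; your approach does need it (or the weaker claim that some non-cut tile has at most two incident complement components), and without a proof of that claim the argument is incomplete at exactly the point where the real work lies.
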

\begin{proof}
We will show that if $A$ is any $(n+1)$-omino, then there exists an $n$-omino $B$ such that $h(B) \ge h(A)-1$.

Let $A$ be an $(n+1)$-omino, and $k$ be the number of tiles in the bottom row of $A$, and denote by $l$ the leftmost tile in this row.

If $k=1$, then $l$ is only connected to one other tile, so we can delete $l$ without disconnecting $A$ or destroying any holes.

Now suppose the statement holds whenever $k =1, 2, \dots,  m$. Then let $k=m+1$ and denote by $l_1$, $l_2$, and $l_3$ the tile sites that share boundary with $l$ that are to the up and right to $l$.
Each of the tile sites $l_1$, $l_2$, and $l_3$ could either be occupied by tiles in $A$ or not. The six possibilities are depicted in Figure \ref{Flcomb2}. Any other combination would result in $A$ having disconnected interior.



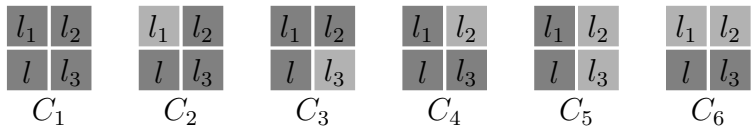
\begin{figure}[ht]
\begin{center}
\begin{tikzpicture} 
\foreach \x/\y in {0 / 0, 0 / 1, 1 / 0, 1 / 1} { 
\path [draw=gray, fill=gray] (.5+\x-0.45, .5+\y-0.45)
-- ++(0,.9)
-- ++(.9,0)
-- ++(0,-.9)
--cycle;
}

\draw (.5,.45) node {$l$};
\draw (.5,1.45) node {$l_1$};
\draw (1.5,.45) node {$l_3$};
\draw (1.5,1.45) node {$l_2$};

\draw (1,-.45) node {$C_1$};

\foreach \x/\y in {0 / 0,  1 / 0, 1 / 1} { 
\path [draw=gray, fill=gray] (1+2.5+\x-0.45, .5+\y-0.45)
-- ++(0,.9)
-- ++(.9,0)
-- ++(0,-.9)
--cycle;
}

\foreach \x/\y in { 0 / 1} { 
\path [draw=lightgray!93!black, fill=lightgray!93!black] (1+2.5+\x-0.45, .5+\y-0.45)
-- ++(0,.9)
-- ++(.9,0)
-- ++(0,-.9)
--cycle;
}

\draw (3.5,.45) node {$l$};
\draw (3.5,1.45) node {$l_1$};
\draw (4.5,.45) node {$l_3$};
\draw (4.5,1.45) node {$l_2$};

\draw (4,-.45) node {$C_2$};

\foreach \x/\y in { 0 / 0, 0 / 1, 1 / 1 } { 
\path [draw=gray, fill=gray] (2+ 4.5+\x-0.45, .5+\y-0.45)
-- ++(0,.9)
-- ++(.9,0)
-- ++(0,-.9)
--cycle;
}

\foreach \x/\y in {1 / 0} { 
\path [draw=lightgray!93!black, fill=lightgray!93!black] (2+4.5+\x-0.45, .5+\y-0.45)
-- ++(0,.9)
-- ++(.9,0)
-- ++(0,-.9)
--cycle;
}

\draw (6.5,.45) node {$l$};
\draw (6.5,1.45) node {$l_1$};
\draw (7.5,.45) node {$l_3$};
\draw (7.5,1.45) node {$l_2$};

\draw (7,-.45) node {$C_3$};

\foreach \x/\y in { 0 / 0, 0 / 1, 1 / 0 } { 
\path [draw=gray, fill=gray] (3+ 6.5+\x-0.45, .5+\y-0.45)
-- ++(0,.9)
-- ++(.9,0)
-- ++(0,-.9)
--cycle;
}

\foreach \x/\y in { 1 / 1} { 
\path [draw=lightgray!93!black, fill=lightgray!93!black] (3+6.5+\x-0.45, .5+\y-0.45)
-- ++(0,.9)
-- ++(.9,0)
-- ++(0,-.9)
--cycle;
}

\draw (9.5,.45) node {$l$};
\draw (9.5,1.45) node {$l_1$};
\draw (10.5,.45) node {$l_3$};
\draw (10.5,1.45) node {$l_2$};

\draw (10,-.45) node {$C_4$};

\foreach \x/\y in { 0 / 0, 0 / 1 } { 
\path [draw=gray, fill=gray] (4+ 8.5+\x-0.45, .5+\y-0.45)
-- ++(0,.9)
-- ++(.9,0)
-- ++(0,-.9)
--cycle;
}

\foreach \x/\y in { 1 / 0, 1 / 1} { 
\path [draw=lightgray!93!black, fill=lightgray!93!black] (4+8.5+\x-0.45, .5+\y-0.45)
-- ++(0,.9)
-- ++(.9,0)
-- ++(0,-.9)
--cycle;
}

\draw (12.5,.45) node {$l$};
\draw (12.5,1.45) node {$l_1$};
\draw (13.5,.45) node {$l_3$};
\draw (13.5,1.45) node {$l_2$};

\draw (13,-.45) node {$C_5$};

\foreach \x/\y in { 0 / 0,  1 / 0} { 
\path [draw=gray, fill=gray] (5+ 10.5+\x-0.45, .5+\y-0.45)
-- ++(0,.9)
-- ++(.9,0)
-- ++(0,-.9)
--cycle;
}

\foreach \x/\y in { 0 / 1, 1 / 1} { 
\path [draw=lightgray!93!black, fill=lightgray!93!black] (5+10.5+\x-0.45, .5+\y-0.45)
-- ++(0,.9)
-- ++(.9,0)
-- ++(0,-.9)
--cycle;
}

\draw (15.5,.45) node {$l$};
\draw (15.5,1.45) node {$l_1$};
\draw (16.5,.45) node {$l_3$};
\draw (16.5,1.45) node {$l_2$};

\draw (16,-.45) node {$C_6$};

 \end{tikzpicture}
\end{center}
\caption{The tile $l$ denotes the leftmost tile in the bottom row of a polyomino $A$. If the tile sites $l_1$, $l_2$, or $l_3$ are in $A$, they are dark gray colored, otherwise they are colored with the lightest gray. $C_1$-$C_6$ are the six possible combinations for these tile sites. All other possibilities are rejected because they give a square structure with a non connected interior and we are supposing that $A$ is a polyomino.}
\label{Flcomb2}
\end{figure}

If $A$ is such that the local tile structure around $l$ coincides with $C_1$, $C_2$, $C_3$, $C_5$, or $C_6$, then it is already possible to delete $l$ from $A$ to generate an $n$-omino $B$ such that $h(B) \geq h(A)-1$. 

However, if $C_4$ is the local structure around $l$, then it is possible that deleting $l$ disconnects $A$. In this case, we delete $l$ and then add a new tile at the empty tile site $l_2$. This yields a new polyomino $A^{\prime}$ with the same number of tiles. If the addition of this new tile causes the coverage of a hole, then $h(A^\prime)= h(A)-1$, and $C_1$ or $C_3$ must then be the new local structure around the leftmost tile of the bottom row of $A^\prime$. This then allows us to terminate the process by deleting the bottom leftmost tile from $A^\prime$ without destroying more holes. If we have not destroyed any holes, then we have an $(n+1)$-omino $A^\prime$ with $h(A)=h(A^\prime)$ and with $k=m$ tiles in the bottom row. Hence, we can apply the induction hypothesis and the desired result follows. 
\end{proof}


\begin{lemma}\label{Lfandg}
For every $m \ge 1$, we have that $f(g(m))=m$. Also, $g(m) = n$ if and only if $f(n)=m$ and $f(n-1) = m-1$.
\end{lemma}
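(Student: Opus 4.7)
The plan is to leverage Lemma \ref{Ljumpsoff} together with the monotonicity of $f$ to show that $g(m)$ coincides with the least $n$ at which $f(n)$ first reaches $m$, and to deduce that at this threshold $f(n-1)$ must equal $m-1$.

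First I would unpack the definition: setting $n_0:=g(m)$ means that some $n_0$-omino has exactly $m$ holes, while no polyomino with fewer than $n_0$ tiles has exactly $m$ holes. The first fact immediately gives $f(n_0)\ge m$. The main observation I need is the stronger statement that $f(n)<m$ for every $n<n_0$. If instead $f(n)\ge m$ for some $n<n_0$, pick the smallest such $n'$; since $f(1)=0$ and $f$ jumps by at most one by Lemma \ref{Ljumpsoff}, we would have $f(n')=m$ exactly, producing an $n'$-omino with exactly $m$ holes and $n'<n_0$, contradicting the minimality in the definition of $g(m)$.

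Combining this with Lemma \ref{Ljumpsoff} applied from $n_0-1$ to $n_0$ gives $f(n_0)\le f(n_0-1)+1\le (m-1)+1=m$, so $f(n_0)=m$, which is the first assertion $f(g(m))=m$. The same inequality, rearranged, forces $f(n_0-1)\ge m-1$, while we already know $f(n_0-1)<m$; hence $f(n_0-1)=m-1$. This establishes the ``only if'' direction of the second claim.

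For the converse, assume $f(n)=m$ and $f(n-1)=m-1$. The first equality exhibits an $n$-omino with $m$ holes, so $g(m)\le n$. The second, combined with monotonicity of $f$ (so $f(n')\le m-1<m$ for all $n'\le n-1$), shows that no polyomino with fewer than $n$ tiles can have $m$ holes, hence $g(m)\ge n$, and equality follows. I do not foresee a serious obstacle: the argument is essentially an exercise in tracking the non-decreasing integer-valued step function $f$, and everything hinges on the unit-step property from Lemma \ref{Ljumpsoff}; the only subtle point, already handled above, is ruling out the a priori possibility that $f$ overshoots $m$ before reaching $g(m)$.
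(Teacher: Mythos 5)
Your proof is correct and follows essentially the same route as the paper: it combines the definition of $g$ with the unit-step property of Lemma \ref{Ljumpsoff} to pin down $f(g(m))=m$, and then reads off the characterization of $g(m)$. You are in fact somewhat more explicit than the paper (the discrete intermediate-value argument from $f(1)=0$, and both directions of the ``if and only if''), which the paper leaves as immediate, but the underlying idea is identical.
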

\begin{proof} 
By the definitions of $f$ and $g$, we have immediately that $f \left( g(m) \right) \geq m $. Suppose by way of contradiction that for some $m$ we have $f(g(m)) \ge m+1$. By Lemma \ref{Ljumpsoff}, $f(g(m)-1)\geq m$. This implies that there exists a polyomino with $g(m)-1$ tiles and at least $m$ holes, but this contradicts the definition of $g$. We then conclude that $f ( g(m) ) = m $ for every $m$. It follows immediately that $g(m) = n$ if and only if $f(n)=m$ and $f(n-1) = m-1$.
\end{proof}

\section{An Upper Bound.}
\subsection{Perimeter.}
Define the perimeter $p(A)$ of a polyomino $A \in \mathcal{A}_n$ as the number of edges in the topological boundary of $A$. That is, $p(A)$ counts the number of edges with a square in $A$ on one side, and a square not in $A$ on the other side. For example, the 8-omino in Figure \ref{Fonehole} has a perimeter equal to 16. 

For $n \ge 1$ we denote $p_{\min}(n)$ and $p_{\max}(n)$ as the minimum perimeter and the maximum perimeter possible for a polyomino with an area of $n$.

In 1976, F. Harary and H. Harborth \cite{harary1976extremal} proved that the minimum perimeter possible in an  $n$-omino is given by, 
\begin{equation}\label{perimeter1976}
p_{min}(n)=2\ceil{2\sqrt{n}}.\\ \\
\end{equation}

%

\begin{figure}[h]
\begin{center}
\begin{tikzpicture} 
\foreach \x/\y in {0 / 0, 0 / 1, 1 / 0, 1 / 1, 1/2, 2/0, 2/1 } { 
\path [draw=gray!66!black, fill=gray!66!black] (.5+\x-0.45, .5+\y-0.45)
-- ++(0,.9)
-- ++(.9,0)
-- ++(0,-.9)
--cycle;
}

\foreach \x/\y in {0 / 0, 0 / 1, 1 / 0, 1 / 1, 0/2, 2/0, 2/1} { 
\path [draw=gray!66!black, fill=gray!66!black] (2+3.5+\x-0.45, .5+\y-0.45)
-- ++(0,.9)
-- ++(.9,0)
-- ++(0,-.9)
--cycle;
}

\foreach \x/\y in { 0 / 1, 0/2, 1 / 0, 1 / 1, 1/2, 2/1, 2/0 } { 
\path [draw=gray!66!black, fill=gray!66!black] (4+ 6.5+\x-0.45, .5+\y-0.45)
-- ++(0,.9)
-- ++(.9,0)
-- ++(0,-.9)
--cycle;
}

 \end{tikzpicture}
\end{center}
\caption{Polyominoes that achieve the minimum perimeter.}
\label{maxper}
\end{figure}
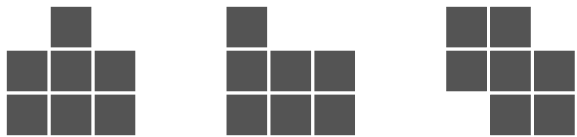

 Let $A \in \mathcal{A}_{n}$, then the number of edges that are on the boundary of two squares of $A$ will be denoted by $b(A)$,  in which case the edges are contained in the interior of the polyomino. Observe that all the edges of the squares of a polyomino either belong to the perimeter or to the interior of the polyomino. This means that,
 \begin{equation}\label{perimeterequal}
 4n= p(A)+ 2 b(A).
 \end{equation}

For example, if $A$ is any 7-omino depicted in Figure \ref{wormlike}, then $b(A)=n-1=6$ and $p(A)=4n-2(n-1)=16$.

Let $b_{min}(n)$ be the minimum number of edges shared by two squares that an $n$-omino can have. It is possible to associate a dual graph with any polyomino by considering each square as a vertex and by connecting any two of these vertices if they share an edge.

\begin{figure}[ht]
\centering

\begin{tikzpicture} 

\foreach \x/\y in {0 / 3, 1 / 2, 1 / 3, 2 / 1, 2/2, 3/0, 3/1 } { 
\path [draw=gray!66!black, fill=gray!66!black] (.5+\x-0.45, .5+\y-0.45)
-- ++(0,.9)
-- ++(.9,0)
-- ++(0,-.9)
--cycle;
}

\foreach \x/\y in {0/1, 0/3, 1/1, 1/2, 1/3, 1/0, 2,2} { 
\path [draw=gray!66!black, fill=gray!66!black] (2+3.5+\x-0.45, .5+\y-0.45)
-- ++(0,.9)
-- ++(.9,0)
-- ++(0,-.9)
--cycle;
}

\foreach \x/\y in {0 / 0, 0 / 1, 1 / 0, 2 / 0, 2 / 1, 0 / 2, 1 / 2 } { 
\path [draw=gray!66!black, fill=gray!66!black] (4+ 3.5+2+\x-0.45, .5+\y-0.45)
-- ++(0,.9)
-- ++(.9,0)
-- ++(0,-.9)
--cycle;
}

 \end{tikzpicture}

\caption{Polyominoes that achieve the maximum perimeter.}

\label{wormlike}

\end{figure}
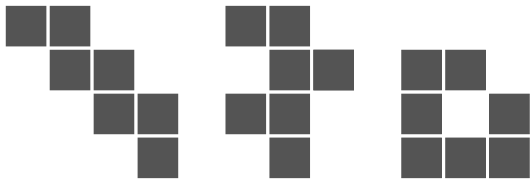

 Let $A$ be an $n$-omino. Because any polyomino has to have a connected interior, then the associated dual graph of $A$ is connected and there exists a spanning tree of this graph. If the dual graph has $n$ vertices, then there are at least $n-1$ edges in the spanning tree. This implies that there are at least $n-1$ different common edges in $A$. That is, $b(A)\geq n-1$, and this is true for any $n$-omino. This then gives us that $b_{\min}(n)\geq n-1$. Observing that the $n$-omino $C$ with only one column has $b(C)=n-1$, we conclude that $b_{min}(n)=n-1$. 

As a consequence of this and equality (\ref{perimeterequal}), for every $n\geq 1$, 
\begin{equation}
p_{max}(n)\leq 4n-2b_{min}(n)=4n-2(n-1)=2n+2,
\end{equation}
and equality is only achieved by polyominoes with the number of common edges equal to $b_{min}(n)$.

We need to distinguish when an edge that is on the perimeter of a polyomino $A \in \mathcal{A}_{n}$ is an edge that forms part of a hole in the polyomino. Define such edges as being part of the hole perimeter. We represent by $p_{h}(A)$ the number of edges on the hole perimeter of $A$. We define the outer perimeter of $A$, denoted by $p_{o}(A)$, as the difference between the perimeter $p(A)$ and the hole perimeter,
\[p_{o}(A)=p(A)-p_{h}(A).\]

If a polyomino $A$ is simply connected,
 then $p(A)=p_{o}(A)$. In general, $p(A)=p_{o}(A)+p_{h}(A)$ by definition.

Polyominoes with holes might achieve the maximum perimeter. However, the next lemma checks the intuitive fact that the minimum perimeter cannot be achieved by polyominoes with holes.
\begin{lemma}\label{nop}
If $A \in \mathcal{A}_{n}$ and $A$ has at least one hole, then $p_{min}(n)<p(A)$.
\end{lemma}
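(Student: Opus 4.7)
The plan is to split the perimeter of $A$ into its outer and hole parts, $p(A)=p_o(A)+p_h(A)$, and to bound each part separately: I will show $p_o(A)\ge p_{\min}(n)$ and $p_h(A)>0$, which together give the strict inequality.

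For the outer part, let $A'$ denote the polyomino obtained from $A$ by filling in every hole, i.e.\ by adjoining to $A$ all unit squares lying in bounded components of the complement. Then $A'$ is simply connected, its boundary is precisely the outer boundary of $A$, so $p(A')=p_o(A)$, and its area is $n'=n+\sum_H |H|\ge n+1$ since $A$ has at least one hole and every hole contains at least one tile. By the Harary--Harborth formula \eqref{perimeter1976}, $p_{\min}(m)=2\lceil 2\sqrt m\,\rceil$ is monotonically non-decreasing in $m$, so
\[
p_o(A)=p(A')\ge p_{\min}(n')\ge p_{\min}(n).
\]

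For the hole part, the boundary of any hole is a closed lattice path around a region containing at least one unit square, so it has at least four edges; hence $p_h(A)\ge 4>0$. Combining,
\[
p(A)=p_o(A)+p_h(A)\ge p_{\min}(n)+4>p_{\min}(n),
\]
which is the desired conclusion.

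The only substantive ingredient is the monotonicity of $p_{\min}$, which is immediate from the Harary--Harborth closed form. The one place to be slightly careful is the identification $p(A')=p_o(A)$: this uses the fact that every edge on the boundary of a hole of $A$ is interior to $A'$ (it has $A$-tiles on one side and a newly added tile on the other), while every edge of $p_o(A)$ remains on the boundary of $A'$.
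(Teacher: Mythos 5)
Your proof is correct and follows essentially the same route as the paper: split $p(A)=p_o(A)+p_h(A)$, bound the outer part below by $p_{\min}$ of the hole-filled polyomino via monotonicity of the Harary--Harborth formula, and note the hole perimeter contributes at least $4$. You are in fact slightly more careful than the paper, making explicit that filling the holes yields a polyomino $A'$ with $p(A')=p_o(A)$ and deriving strictness from the $+4$ term rather than from monotonicity of $p_{\min}$.
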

\begin{proof}
By (\ref{perimeter1976}), an $n$-omino with $k$ holes, with $k>1$,  has an outer perimeter at least equal to $p_{min}(k+n)$. A polyomino with $k$ holes has a hole perimeter greater or equal to $4k$. Then, we have,
\begin{equation}\label{holesnominp}
p(A)=p_{o}(A)+p_{h}(A)\geq p_{min}(n+k)+4k.
\end{equation}
Because the function $h(x)=2\ceil{2\sqrt{x}}$ is a non-decreasing function, we can conclude from (\ref{holesnominp}) that,
\[p(A)\geq p_{min}(n+k)+4k > p_{min}(n)+4k.\]
\end{proof}

We denote this minimum outer perimeter over all polyominoes with $n$ tiles and $m$ holes (whenever such polyominoes exist) by $p_{\min}^{out}(n,m)$.
%
Note that we always have,
\begin{eqnarray}\label{pminpminout}
p_{\min}(n+m) \leq p_{\min}^{out}(n,m),
\end{eqnarray}
by definition.
%


\subsection{Main upper bound.}\label{SSmainub}

We prove in the next lemma a general result which allows us to generate an upper bound of $f$ from any lower bound. We will apply this lemma again in Sections \ref{FractalSequence} and \ref{Sgeneralb}.

\begin{lemma}\label{LTupperbound}
Let $n$ be any natural number. If $f(n)$ denotes the maximum number of holes that an element of $\mathcal{A}_n$ can have, then, 
\begin{equation}\label{ub}
f(n) \leq \frac{4n-2b_{min}(n)-p_{min}(n+f(n))}{4}.
\end{equation}
\end{lemma}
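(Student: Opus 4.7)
The plan is to let $A \in \mathcal{A}_n$ be a polyomino achieving the maximum number of holes, so $h(A) = f(n)$, and then squeeze $p(A)$ between two bounds whose comparison yields the inequality.

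For the upper bound on $p(A)$, I would invoke equation (\ref{perimeterequal}) directly: $p(A) = 4n - 2b(A)$, and since every polyomino satisfies $b(A) \geq b_{\min}(n)$, this gives $p(A) \leq 4n - 2b_{\min}(n)$. For the lower bound on $p(A)$, I would split $p(A) = p_o(A) + p_h(A)$ and handle each summand separately. Each of the $f(n)$ holes is a bounded component of the complement, so its boundary contributes at least $4$ edges (a single empty square already has $4$ edges), hence $p_h(A) \geq 4f(n)$. For the outer perimeter, I would fill in every hole of $A$, obtaining a simply-connected polyomino $\widetilde{A}$ whose area is $n$ plus the total area of the holes, which is at least $n+f(n)$; its perimeter equals $p_o(A)$ because only the outer boundary edges of $A$ remain. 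Using the Harary--Harborth formula (\ref{perimeter1976}), $p_{\min}(n) = 2\lceil 2\sqrt{n}\rceil$ is monotonically non-decreasing, so $p_o(A) = p(\widetilde{A}) \geq p_{\min}(n+f(n))$.

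Combining these two directions,
\begin{equation*}
p_{\min}(n+f(n)) + 4f(n) \;\leq\; p_o(A) + p_h(A) \;=\; p(A) \;\leq\; 4n - 2b_{\min}(n),
\end{equation*}
and isolating $f(n)$ on the left yields exactly the claimed inequality (\ref{ub}).

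I do not expect any serious obstacle: the content of the lemma is really a bookkeeping argument that packages together (\ref{perimeterequal}), the Harary--Harborth lower bound on perimeter, and the obvious fact that each hole contributes at least four edges to the perimeter. The only point that deserves a brief justification in writing is that filling in the holes produces a genuine simply-connected polyomino of area $\geq n+f(n)$ whose perimeter is precisely $p_o(A)$; once that is said, monotonicity of $p_{\min}$ closes the argument.
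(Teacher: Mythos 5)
Your proof is correct and follows essentially the same route as the paper: it combines $p(A)\leq 4n-2b_{\min}(n)$ from (\ref{perimeterequal}), the bound $p_h(A)\geq 4f(n)$, and $p_o(A)\geq p_{\min}(n+f(n))$, then rearranges. The only cosmetic difference is that you justify the outer-perimeter bound by explicitly filling in the holes, whereas the paper packages that step as the inequality $p_{\min}(n+m)\leq p_{\min}^{out}(n,m)$ together with monotonicity of $p_{\min}$.
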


\begin{proof}
Let $A$ be an element in $\mathcal{A}_{n}$ and let $h(A)$ denote the number of holes in $A$. Then,
\begin{equation}\label{upbound1}
p_{h}(A) = 4n-2b(A)-p_{o}(A)\leq  4n-2b_{min}(n)-p_{o}(A).
\end{equation}

By (\ref{pminpminout}) we have, 
\begin{equation}\label{upbound2}
p_{o}(A)\geq p_{min}^{out}(n,h(A))\geq p_{min}(n+h(A)).
\end{equation}
From inequalities (\ref{upbound1}) and (\ref{upbound2}) we get,
\begin{equation}\label{upbound3}
p_{h}(A) \leq 4n-2b_{min}(n)-p_{min}(n+h(A)).
\end{equation}
Then, if $A$ is a polyomino that has $f(n)$ holes, we get, 
\begin{equation*}
f(n)\leq \frac{p_{h}(A)}{4}\leq \frac{4n-2b_{min}(n)-p_{min}(n+f(n))}{4},
\end{equation*}
which establishes inequality (\ref{ub}). 
\end{proof}

As a corollary for Lemma \ref{LTupperbound}, from any lower bound of $f$, we get the following upper bound for $f$,
\begin{corollary}\label{Cublb}
If $lb_f(n)\leq f(n)$, then
\begin{equation}\label{ublb}
f(n)\leq \frac{1}{2}n-\frac{1}{2}\ceil*{2\sqrt{n+lb_f(n)}}+\frac{1}{2},
\end{equation}
\end{corollary}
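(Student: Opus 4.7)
The plan is to apply Lemma \ref{LTupperbound} directly and then substitute the known closed-form expressions for $b_{\min}$ and $p_{\min}$, finishing with a monotonicity argument to trade $f(n)$ inside the square root for the lower bound $lb_f(n)$.

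First I would insert the Harary--Harborth formula $p_{\min}(n)=2\lceil 2\sqrt{n}\rceil$ from equation (\ref{perimeter1976}) and the earlier observation that $b_{\min}(n)=n-1$ into the right-hand side of (\ref{ub}). This turns Lemma \ref{LTupperbound} into
\begin{equation*}
f(n)\leq \frac{4n-2(n-1)-2\lceil 2\sqrt{n+f(n)}\,\rceil}{4}=\frac{n}{2}-\frac{1}{2}\lceil 2\sqrt{n+f(n)}\,\rceil+\frac{1}{2}.
\end{equation*}
This is exactly the desired inequality (\ref{ublb}), except that $f(n)$ still appears inside the square root rather than $lb_f(n)$.

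Next, to replace $f(n)$ with $lb_f(n)$ under the radical, I would invoke the hypothesis $lb_f(n)\leq f(n)$ together with the fact that $x\mapsto \lceil 2\sqrt{x}\,\rceil$ is a non-decreasing function of $x$ (this monotonicity was already used in the proof of Lemma \ref{nop}). Since $n+lb_f(n)\leq n+f(n)$, we get $\lceil 2\sqrt{n+lb_f(n)}\,\rceil\leq \lceil 2\sqrt{n+f(n)}\,\rceil$, and multiplying by $-\tfrac{1}{2}$ reverses the inequality. Consequently
\begin{equation*}
f(n)\leq \frac{n}{2}-\frac{1}{2}\lceil 2\sqrt{n+f(n)}\,\rceil+\frac{1}{2}\leq \frac{n}{2}-\frac{1}{2}\lceil 2\sqrt{n+lb_f(n)}\,\rceil+\frac{1}{2},
\end{equation*}
which is precisely (\ref{ublb}).

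There is no real obstacle here: the whole content of the corollary is packaged in Lemma \ref{LTupperbound}, and what remains is an arithmetic simplification plus a direction-of-inequality check. The only point worth being careful about is that we are weakening the upper bound (not strengthening it) when we pass from $f(n)$ to $lb_f(n)$ inside the ceiling, so the inequality remains valid in the direction we need.
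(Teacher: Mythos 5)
Your proposal is correct and follows essentially the same route as the paper: apply Lemma \ref{LTupperbound}, substitute $b_{\min}(n)=n-1$ and the Harary--Harborth formula for $p_{\min}$, and use monotonicity of $p_{\min}$ (equivalently of $x\mapsto\ceil*{2\sqrt{x}}$) to replace $f(n)$ by $lb_f(n)$ under the radical, with the inequality direction correctly checked. The only difference is cosmetic: the paper swaps $p_{\min}(n+f(n))$ for $p_{\min}(n+lb_f(n))$ before plugging in the closed forms, while you do it after.
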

\begin{proof}
Let $lb_f(n)\leq f(n)$ for a natural number $n$, then $p_{min}(n+lb_f(n))\leq p_{min}(n+f(n))$, by monotonicity. This inequality allow us to obtain (\ref{ublb}) by substituting $p_{min}(n+f(n))$ with $p_{min}(n+lb_f(n))$ in  (\ref{ub}).
\end{proof}


\section{Polyominoes that attain the maximum number of holes.}\label{FractalSequence} 
\subsection{Construction of the sequence.}
We are going to describe below how to construct a sequence $\{S_{k}\}_{k=1}^\infty$ of polyominoes with $n_k$ squares and $h(S_k) = h_k$ holes. Remember that $n_k$ and $h_k$ were defined in (\ref{Defn}) and (\ref{Defh}) respectively.

The first three elements, $S_1$, $S_2$, and $S_3$ of the sequence were shown in Figure \ref{Fonehole}.




To generate the rest of the sequence for $n \geq 2$, we follow the next general recursion process:

\begin{itemize}
\item First, place a rotation point in the center of the top right tile of $S_{n-1}$.
\item Then, rotate $S_{n-1}$ with respect to this point ninety degrees four times creating four, overlapping copies. 
\item Finally, remove the tile containing the rotation point.
\end{itemize}

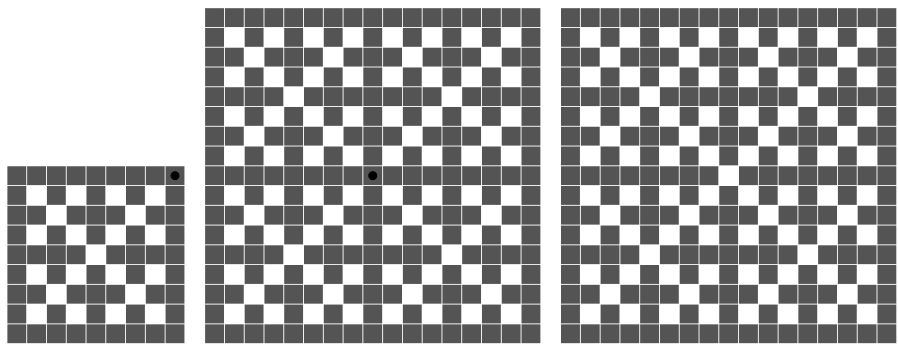
\begin{figure}[ht]
\centering

\begin{tikzpicture} [scale=.45]
\foreach \x/\y in {0 / 0, 0 / 1, 1 / 0, 2 / 0, 2 / 1, 0 / 2, 1 / 2, 4 / 0, 4 / 1, 3 / 0, 4 / 2, 3 / 2, 0 / 4, 0 / 3, 1 / 4, 2 / 4, 2 / 3, 4 / 3, 3 / 4, 8 / 0, 8 / 1, 7 / 0, 6 / 0, 6 / 1, 8 / 2, 7 / 2, 5 / 0, 5 / 2, 8 / 4, 8 / 3, 7 / 4, 6 / 4, 6 / 3, 5 / 4, 0 / 8, 0 / 7, 1 / 8, 2 / 8, 2 / 7, 0 / 6, 1 / 6, 4 / 8, 4 / 7, 3 / 8, 4 / 6, 3 / 6, 0 / 5, 2 / 5, 4 / 5, 8 / 8, 8 / 7, 7 / 8, 6 / 8, 6 / 7, 8 / 6, 7 / 6, 5 / 8, 5 / 6, 8 / 5, 6 / 5 } { 
\path [draw=gray!66!black, fill=gray!66!black] (.5+\x-0.45, .5+\y-0.45)
-- ++(0,.9)
-- ++(.9,0)
-- ++(0,-.9)
--cycle;
}

\foreach \x/\y in {0 / 0, 0 / 1, 1 / 0, 2 / 0, 2 / 1, 0 / 2, 1 / 2, 4 / 0, 4 / 1, 3 / 0, 4 / 2, 3 / 2, 0 / 4, 0 / 3, 1 / 4, 2 / 4, 2 / 3, 4 / 3, 3 / 4, 8 / 0, 8 / 1, 7 / 0, 6 / 0, 6 / 1, 8 / 2, 7 / 2, 5 / 0, 5 / 2, 8 / 4, 8 / 3, 7 / 4, 6 / 4, 6 / 3, 5 / 4, 0 / 8, 0 / 7, 1 / 8, 2 / 8, 2 / 7, 0 / 6, 1 / 6, 4 / 8, 4 / 7, 3 / 8, 4 / 6, 3 / 6, 0 / 5, 2 / 5, 4 / 5, 8 / 7, 7 / 8, 6 / 8, 6 / 7, 8 / 6, 7 / 6, 5 / 8, 5 / 6, 8 / 5, 6 / 5, 16 / 0, 16 / 1, 15 / 0, 14 / 0, 14 / 1, 16 / 2, 15 / 2, 12 / 0, 12 / 1, 13 / 0, 12 / 2, 13 / 2, 16 / 4, 16 / 3, 15 / 4, 14 / 4, 14 / 3, 12 / 3, 13 / 4, 9 / 0, 10 / 0, 10 / 1, 9 / 2, 11 / 0, 11 / 2, 9 / 4, 10 / 4, 10 / 3, 11 / 4, 16 / 8, 16 / 7, 15 / 8, 14 / 8, 14 / 7, 16 / 6, 15 / 6, 12 / 8, 12 / 7, 13 / 8, 12 / 6, 13 / 6, 16 / 5, 14 / 5, 12 / 5, 9 / 8, 10 / 8, 10 / 7, 9 / 6, 11 / 8, 11 / 6, 10 / 5, 0 / 16, 0 / 15, 1 / 16, 2 / 16, 2 / 15, 0 / 14, 1 / 14, 4 / 16, 4 / 15, 3 / 16, 4 / 14, 3 / 14, 0 / 12, 0 / 13, 1 / 12, 2 / 12, 2 / 13, 4 / 13, 3 / 12, 8 / 16, 8 / 15, 7 / 16, 6 / 16, 6 / 15, 8 / 14, 7 / 14, 5 / 16, 5 / 14, 8 / 12, 8 / 13, 7 / 12, 6 / 12, 6 / 13, 5 / 12, 0 / 9, 2 / 9, 0 / 10, 1 / 10, 4 / 9, 4 / 10, 3 / 10, 0 / 11, 2 / 11, 4 / 11, 8 / 9, 6 / 9, 8 / 10, 7 / 10, 5 / 10, 8 / 11, 6 / 11, 16 / 16, 16 / 15, 15 / 16, 14 / 16, 14 / 15, 16 / 14, 15 / 14, 12 / 16, 12 / 15, 13 / 16, 12 / 14, 13 / 14, 16 / 12, 16 / 13, 15 / 12, 14 / 12, 14 / 13, 12 / 13, 13 / 12, 9 / 16, 10 / 16, 10 / 15, 9 / 14, 11 / 16, 11 / 14, 9 / 12, 10 / 12, 10 / 13, 11 / 12, 16 / 9, 14 / 9, 16 / 10, 15 / 10, 12 / 9, 12 / 10, 13 / 10, 16 / 11, 14 / 11, 12 / 11, 10 / 9, 9 / 10, 11 / 10, 10 / 11, 8/8} { 
\path [draw=gray!66!black, fill=gray!66!black] (10+.5+\x-0.45, .5+\y-0.45)
-- ++(0,.9)
-- ++(.9,0)
-- ++(0,-.9)
--cycle;
}

\foreach \x/\y in {0 / 0, 0 / 1, 1 / 0, 2 / 0, 2 / 1, 0 / 2, 1 / 2, 4 / 0, 4 / 1, 3 / 0, 4 / 2, 3 / 2, 0 / 4, 0 / 3, 1 / 4, 2 / 4, 2 / 3, 4 / 3, 3 / 4, 8 / 0, 8 / 1, 7 / 0, 6 / 0, 6 / 1, 8 / 2, 7 / 2, 5 / 0, 5 / 2, 8 / 4, 8 / 3, 7 / 4, 6 / 4, 6 / 3, 5 / 4, 0 / 8, 0 / 7, 1 / 8, 2 / 8, 2 / 7, 0 / 6, 1 / 6, 4 / 8, 4 / 7, 3 / 8, 4 / 6, 3 / 6, 0 / 5, 2 / 5, 4 / 5, 8 / 7, 7 / 8, 6 / 8, 6 / 7, 8 / 6, 7 / 6, 5 / 8, 5 / 6, 8 / 5, 6 / 5, 16 / 0, 16 / 1, 15 / 0, 14 / 0, 14 / 1, 16 / 2, 15 / 2, 12 / 0, 12 / 1, 13 / 0, 12 / 2, 13 / 2, 16 / 4, 16 / 3, 15 / 4, 14 / 4, 14 / 3, 12 / 3, 13 / 4, 9 / 0, 10 / 0, 10 / 1, 9 / 2, 11 / 0, 11 / 2, 9 / 4, 10 / 4, 10 / 3, 11 / 4, 16 / 8, 16 / 7, 15 / 8, 14 / 8, 14 / 7, 16 / 6, 15 / 6, 12 / 8, 12 / 7, 13 / 8, 12 / 6, 13 / 6, 16 / 5, 14 / 5, 12 / 5, 9 / 8, 10 / 8, 10 / 7, 9 / 6, 11 / 8, 11 / 6, 10 / 5, 0 / 16, 0 / 15, 1 / 16, 2 / 16, 2 / 15, 0 / 14, 1 / 14, 4 / 16, 4 / 15, 3 / 16, 4 / 14, 3 / 14, 0 / 12, 0 / 13, 1 / 12, 2 / 12, 2 / 13, 4 / 13, 3 / 12, 8 / 16, 8 / 15, 7 / 16, 6 / 16, 6 / 15, 8 / 14, 7 / 14, 5 / 16, 5 / 14, 8 / 12, 8 / 13, 7 / 12, 6 / 12, 6 / 13, 5 / 12, 0 / 9, 2 / 9, 0 / 10, 1 / 10, 4 / 9, 4 / 10, 3 / 10, 0 / 11, 2 / 11, 4 / 11, 8 / 9, 6 / 9, 8 / 10, 7 / 10, 5 / 10, 8 / 11, 6 / 11, 16 / 16, 16 / 15, 15 / 16, 14 / 16, 14 / 15, 16 / 14, 15 / 14, 12 / 16, 12 / 15, 13 / 16, 12 / 14, 13 / 14, 16 / 12, 16 / 13, 15 / 12, 14 / 12, 14 / 13, 12 / 13, 13 / 12, 9 / 16, 10 / 16, 10 / 15, 9 / 14, 11 / 16, 11 / 14, 9 / 12, 10 / 12, 10 / 13, 11 / 12, 16 / 9, 14 / 9, 16 / 10, 15 / 10, 12 / 9, 12 / 10, 13 / 10, 16 / 11, 14 / 11, 12 / 11, 10 / 9, 9 / 10, 11 / 10, 10 / 11} { 
\path [draw=gray!66!black, fill=gray!66!black] (28.5+\x-0.45, .5+\y-0.45)
-- ++(0,.9)
-- ++(.9,0)
-- ++(0,-.9)
--cycle;
}

\draw [fill=black, black] (8.5,8.5) circle [radius=.2];

\draw [fill=black, black] (18.5, 8.5) circle [radius=.2];
 \end{tikzpicture}
\caption{Generating $S_{n+1}$ from $S_n$ (L to R). (1) The polyomino $S_3$, (2) four overlapping rotated copies of $S_3$, and (3) the polyomino $S_4$ made by removing the tile of rotation.
 }
 \label{Ifractalconstruction}
\end{figure}

\subsection{Properties of the sequence $S_k$ and proof of Theorem \ref{Ttremebundoexactly}.}
From this construction, we observe that for $k \ge 1$ we have the recursion $h(S_{k}) = 4h(S_{k-1})+1$. The factor of four is due to the four reflected copies of $S_{k-1}$ generated in the process of constructing $S_k$. The one hole added is generated by the square removed after the rotation process. Then, because $h(S_1)=1$, we get $h(S_k)= h_k$ for all $k\geq 1$.

Let $s_k$ be the number of tiles in $S_k$ for $k\geq 1$. The sequence $s_k$ satisfies the recursion, $$s_{k+1}=4s_k-4(2^k+1),$$ because the polyominoes $S_k$ have side lengths of $2^k +1$ tiles and, in the rotation process, $4(2^k+1)$ tiles overlap. Additionally, the sequence $n_k$ satisfies the relationship, $$n_{k+1}=4n_k-4(2^k+1).$$ Then, because both $n_k$ and $s_k$ satisfy the same recursion relationship and are equal in the first element ($s_1=8$, $n_1=8$), we can conclude that $n_k$ and $s_k$ are the same sequences.

We have proved the following lemma.
\begin{lemma}\label{Lsequencek}
There exists a sequence of polyominoes $\{S_k\}_{k=1}^\infty$, such that $S_k$ has $n_k$ tiles and $h_k$ holes.
\end{lemma}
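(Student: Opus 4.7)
The plan is to proceed by induction on $k \geq 1$, carrying along the auxiliary invariant that $S_k$ sits inside a $(2^k+1)\times(2^k+1)$ bounding square with all four boundary rows and columns fully tiled. For the base case $k=1$, take $S_1$ to be the $8$-omino of Figure \ref{Fonehole}: it lies in a $3\times 3$ square with all boundary tiles present, has $n_1 = 8$ tiles, and encloses $h_1 = 1$ hole.

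For the inductive step, assume the statement holds for $S_{k-1}$ and form $S_k$ via the four-fold rotation construction. The key geometric claim is that the four rotated copies occupy the four quadrants of a $(2^k+1)\times(2^k+1)$ square, with cyclically adjacent copies meeting exactly along the row or column of $2^{k-1}+1$ tiles through the rotation tile, and opposite copies meeting only at the rotation tile itself. The adjacent-pair overlap is the shared outer edge of $S_{k-1}$, which is fully tiled by the invariant, so the overlap really does consist of $2^{k-1}+1$ doubled tiles. An inclusion-exclusion count then shows the union of the four copies contains $4 s_{k-1} - 4(2^{k-1}+1) + 1$ tiles, and deleting the rotation tile yields
\begin{equation*}
s_k = 4 s_{k-1} - 4(2^{k-1}+1).
\end{equation*}
Since $n_k$ satisfies the same recurrence with initial value $n_1 = 8$, one concludes $s_k = n_k$ by induction.

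Next I count holes. Because every pairwise overlap between copies lies on the outer boundary of both copies, no hole of any copy is filled in when the union is taken, so the union encloses $4 h(S_{k-1})$ holes. Deleting the rotation tile introduces exactly one additional hole, because its four lattice neighbors all belong to the union: each lies on a shared row or column which the invariant guarantees is fully tiled. Hence $h(S_k) = 4 h(S_{k-1}) + 1$, and since $h_k$ satisfies this same recursion with $h_1 = 1$, we obtain $h(S_k) = h_k$. The boundary-tiling invariant is restored for $S_k$ because its four outer rows and columns are rotated images of the corresponding fully tiled rows and columns of $S_{k-1}$.

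The crux is the geometric claim about the overlaps of the four copies, together with the fact that the rotation tile is surrounded on all four sides in the union. Both rely essentially on the auxiliary invariant that $S_{k-1}$ has a fully tiled outer frame, which is why I propagate this invariant through the induction rather than tracking only the tile and hole counts.
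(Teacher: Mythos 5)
Your proof is correct and follows essentially the same route as the paper: the same four-fold rotation construction, the same tile and hole recursions, and comparison with the recursions for $n_k$ and $h_k$. The only difference is that you make explicit (via the fully tiled outer frame invariant and the inclusion–exclusion count) the overlap and hole-creation facts that the paper simply asserts, which is a welcome tightening rather than a different argument.
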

%

\begin{proof}[Proof of Theorem \ref{Ttremebundoexactly}]
First we show that $f(n_k)=h_k$. From Lemma \ref{Lsequencek} we know, 
\begin{equation}\label{eq}
h_k \leq f(n_k).
\end{equation} Substituting this lower bound in  (\ref{ublb}) we have, 

\begin{equation}\label{pluginmaxexactly1}
f(n_k) \leq \frac{1}{2}n_k-\frac{1}{2}\ceil*{2\sqrt{n_k + h_k} } + \frac{1}{2}.
\end{equation}
From the easily verified identity, 
\[h_k +\frac{1}{2} = \frac{1}{2}n_k-\frac{1}{2}\ceil*{2\sqrt{n_k+h_k}}+\frac{1}{2},
\] and inequalities (\ref{pluginmaxexactly1}) and (\ref{eq}), we get, \begin{equation}
h_k  \leq f(n_k) \leq h_k + \frac{1}{2}.
\end{equation}
This implies that, 
\begin{equation}\label{Efnkhk}
f(n_k) = h_k,
\end{equation}
because $f(n_k)$ and $h_k$ are integers.

Now, we prove that $f(n_k-1) = h_k$. By removing the upper leftmost square from each $S_k$, it is possible to generate a sequence of polyominoes $\{A_k\}_{k=1}^\infty$ with $n_k-1$ tiles each $A_k$, such that $h(A_k) = h_k$. This implies that $h_k\leq f(n_k-1).$
Then, because $f(n_k-1) \leq f(n_k)$ and $f(n_k)=h_k$, we can conclude that,
 \[f(n_k-1)=h_k. \]

%

Finally, we prove that $f(n_k-2) = h_k-1$. 
Because $f(n_k-1) = h_k$ and $f$ is a non decreasing function, it is forced that $f(n_k-2) \leq  h_k$. If we assume by way of contradiction that $f(n_k-2) = h_k$, then, using (\ref{ublb}) it must be the case that,
\begin{equation}
h_k=f(n_k-2)\leq \frac{1}{2}(n_k-2)-\frac{1}{2}\ceil*{2\sqrt{(n_k-2)+h_k}}+\frac{1}{2}.
\end{equation}
Substituting (\ref{Defn}) and (\ref{Defh}) in this inequality leads to a contradiction. Then $f(n_k-2)< h_k$; and, from this inequality and Lemma \ref{Ljumpsoff}, we can conclude that,
\begin{equation}
f(n_k-2) = h_k - 1.
\end{equation}

\end{proof}

\section{ General bounds}\label{Sgeneralb}
In this section we prove Theorem \ref{Ttremebundo}. We first prove the lower bound. We construct a sequence of polyominoes $\{R_k\}_{k=1}^\infty$ of polyominoes with  $m_k=40k^2+20k$ tiles and $t_k=20k^2$ holes, as follows.

We first place $10k^2$ copies of the pattern $S$ (in Figure \ref{pattern}) into a rectangle $6k$ high and $10k$ long. We add a top row of  $10k$ tiles, and a leftmost column of $6k-1$  tiles. Finally, we attach $2k$ vertically aligned dominoes (for a total of $4k$ tiles) to the rightmost column. The polyomino $R_2$ is depicted in Figure \ref{Fm2}, and the $40k^2$ tiles just described are colored with the lightest gray in this figure. The initial, repeated pattern $S$ is bordered in black within $R_2$. 

Next ``we fill in the gaps'' between these constructions to a family of polyominoes $\{R_{k,l} \}$ with $m_{k,l} = 40k^2 + 20k + l$ tiles and $t_{k,l} = 20k^2 + \lfloor l / 2 \rfloor$ holes, defined whenever
$$0 \le l \le \sum_{i=1}^{2k-1} 2i = 2k(2k-1).$$
The polyomino $R_{k,l}$ is constructed from $R_{k}$ by adding tiles along the right side, continuing the domino pattern, as in Figure \ref{m1}. Every two tiles added creates one hole.

Note that $m_{k+1} - m_k  = 80k+ 60$, and $$80k + 60 \le 2k(2k-1)$$ for $k \ge 42$.
We define a sequence of polyominoes $\{ R'_n \}$ for all $n \geq m_{42}$, as follows.
Let $k$ be the largest number such that $m_k \le n$, let $l = n - m_k$, and then define $R'_n = R_{k,l}$.


Now we check that if $C_1 > \sqrt{5/2}$, then $f(n) \ge n/2 - C_1 \sqrt{n}$ for all large enough $n$.

The polyomino $R'_n= R_{k,l}$ has $m_{k,l}$ tiles and $t_{k,l}$ holes. Since $k$ and $l$ are nonnegative, we have
\begin{equation}
(5/2) \left(40k^2+ 20k+ l \right) = 100k^2 + 50k + (5/2) l \ge 100 k^2 + 20 k + 1.
\end{equation}
Taking square roots of both sides gives 
\begin{equation}
\sqrt{5/2} \sqrt{40k^2 +20k+l } \ge 10 k+1.\\
\end{equation}

Since $C_1 > \sqrt{5/2}$, this gives
\begin{equation} \label{eqn:40k2}
C_1 \sqrt{40k^2 +20k+l } \ge 10 k+1.\\
\end{equation}

Then 
\begin{equation} \label{eqn:20k2}
20k^2 + \left\lfloor \frac{l}{2} \right\rfloor \ge \frac{1}{2} \left( 40 k^2 + 20k + l \right)  - C_1  \sqrt{40k^2 +20k+l }.
\end{equation}

Inequality (\ref{eqn:20k2}) was obtained from (\ref{eqn:40k2}) by adding $20k^2$ to both sides, adding  the inequality $\left\lfloor \frac{l}{2} \right\rfloor  \ge \frac{l}{2} - 1$, and rearranging terms.

By considering the sequence $R'_n$, we have
\begin{equation} \label{eqn:mainlower}
f(n) \ge \frac{n}{2} - C_1 \sqrt{n}
\end{equation} for all large enough $n$, as desired.

Combining inequality (\ref{eqn:mainlower}) with Corollary \ref{Cublb}, we get an upper bound that allows us to complete the proof of Theorem  \ref{Ttremebundo}. Indeed, we have
\begin{align}
f(n) & \le \frac{1}{2} n -\frac{1}{2} \left\lceil 2 \sqrt{n + \frac{n}{2} - C_1 \sqrt{n}} \right\rceil + \frac{1}{2} \\
& \le  \frac{1}{2} n - \sqrt{\frac{3n}{2} - C_1 \sqrt{n} } + 1 \\
& \le \frac{1}{2} n - C_2 \sqrt{n}, \label{eqn:mainupper}
\end{align}
for $C_2 < \sqrt{3/2}$ and large enough $n$.


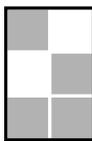
\begin{figure}[ht]
\centering

\begin{tikzpicture}
\foreach \x/\y in {0 / 0, 1 / 0, 1 / 1, 0/2 } { 
\path [draw=lightgray!93!black, fill=lightgray!93!black] (.5+\x-0.45, .5+\y-0.45)
-- ++(0,.9)
-- ++(.9,0)
-- ++(0,-.9)
--cycle;
}

\draw[black, very thick] (0,0)rectangle (2,3);

 \end{tikzpicture}
 \caption{The pattern $S$ consists of $4$ tiles and $2$ holes.}
 \label{pattern}
\end{figure}

\begin{figure}[ht]
\centering

\begin{tikzpicture} [scale=0.6]
\foreach \x/\y in {21 / 3, 21 / 4, 21 / 6, 21 / 7, 21 / 9, 21 / 10, 21 / 12, 21 / 13 } { 
\path [draw=gray!60!black, fill=gray!60!black] (\x-0.45, .5+\y-0.45)
-- ++(0,.9)
-- ++(.9,0)
-- ++(0,-.9)
--cycle;
}

\foreach \x/\y in {0 / 1, 0 / 2, 0 / 3, 0 / 4, 0 / 5, 0 / 6, 0 / 7, 0 / 8, 0 / 9, 0 / 10, 0 / 11, 0 / 12, 1 / 13, 2 / 13, 3 / 13, 4 / 13, 5 / 13, 6 / 13, 7 / 13, 8 / 13, 9 / 13, 10 / 13, 11 / 13, 12 / 13, 13 / 13, 14 / 13, 15 / 13, 16 / 13, 17 / 13, 18 / 13, 19 / 13, 20 / 13} { 
\path [draw=gray, fill=gray] (\x-0.45, .5+\y-0.45)
-- ++(0,.9)
-- ++(.9,0)
-- ++(0,-.9)
--cycle;
}

\foreach \x/\y in {1 / 1, 2 / 1, 3 / 1, 4 / 1, 5 / 1, 6 / 1, 7 / 1, 8 / 1, 9 / 1, 10 / 1, 11 / 1, 12 / 1, 13 / 1, 14 / 1, 15 / 1, 16 / 1, 17 / 1, 18 / 1, 19 / 1, 20 / 1, 2 / 2, 4 / 2, 6 / 2, 8 / 2, 10 / 2, 12 / 2, 14 / 2, 16 / 2, 18 / 2, 20 / 2, 1 / 3, 3 / 3, 5 / 3, 7 / 3, 9 / 3, 11 / 3, 13 / 3, 15 / 3, 17 / 3, 19 / 3, 1 / 4, 2 / 4, 3 / 4, 4 / 4, 5 / 4, 6 / 4, 7 / 4, 8 / 4, 9 / 4, 10 / 4, 11 / 4, 12 / 4, 13 / 4, 14 / 4, 15 / 4, 16 / 4, 17 / 4, 18 / 4, 19 / 4, 20 / 4, 2 / 5, 4 / 5, 6 / 5, 8 / 5, 10 / 5, 12 / 5, 14 / 5, 16 / 5, 18 / 5, 20 / 5, 1 / 6, 3 / 6, 5 / 6, 7 / 6, 9 / 6, 11 / 6, 13 / 6, 15 / 6, 17 / 6, 19 / 6, 1 / 7, 2 / 7, 3 / 7, 4 / 7, 5 / 7, 6 / 7, 7 / 7, 8 / 7, 9 / 7, 10 / 7, 11 / 7, 12 / 7, 13 / 7, 14 / 7, 15 / 7, 16 / 7, 17 / 7, 18 / 7, 19 / 7, 20 / 7, 2 / 8, 4 / 8, 6 / 8, 8 / 8, 10 / 8, 12 / 8, 14 / 8, 16 / 8, 18 / 8, 20 / 8, 1 / 9, 3 / 9, 5 / 9, 7 / 9, 9 / 9, 11 / 9, 13 / 9, 15 / 9, 17 / 9, 19 / 9, 1 / 10, 2 / 10, 3 / 10, 4 / 10, 5 / 10, 6 / 10, 7 / 10, 8 / 10, 9 / 10, 10 / 10, 11 / 10, 12 / 10, 13 / 10, 14 / 10, 15 / 10, 16 / 10, 17 / 10, 18 / 10, 19 / 10, 20 / 10, 2 / 11, 4 / 11, 6 / 11, 8 / 11, 10 / 11, 12 / 11, 14 / 11, 16 / 11, 18 / 11, 20 / 11, 1 / 12, 3 / 12, 5 / 12, 7 / 12, 9 / 12, 11 / 12, 13 / 12, 15 / 12, 17 / 12, 19 / 12} { 
\path [draw=lightgray!93!black, fill=lightgray!93!black] (\x-0.45, .5+\y-0.45)
-- ++(0,.9)
-- ++(.9,0)
-- ++(0,-.9)
--cycle;
}

\draw[black, very thick] (.5,1)rectangle (20.5 ,13);
\draw[black, very thick] (.5,4)-- (20.5 ,4);
\draw[black, very thick] (.5,7)-- (20.5 ,7);
\draw[black, very thick] (.5,10)-- (20.5 ,10);

\draw[black, very thick] (2.5,1)-- (2.5 ,13);
\draw[black, very thick] (4.5,1)-- (4.5 ,13);
\draw[black, very thick] (6.5,1)-- (6.5 ,13);
\draw[black, very thick] (8.5,1)-- (8.5 ,13);
\draw[black, very thick] (10.5,1)-- (10.5 ,13);
\draw[black, very thick] (12.5,1)-- (12.5 ,13);
\draw[black, very thick] (14.5,1)-- (14.5 ,13);
\draw[black, very thick] (16.5,1)-- (16.5 ,13);
\draw[black, very thick] (18.5,1)-- (18.5 ,13);

 \end{tikzpicture}
 \caption{The polyomino $R_2$ has $200$ tiles and $80$ holes.}
 \label{Fm2}
\end{figure}
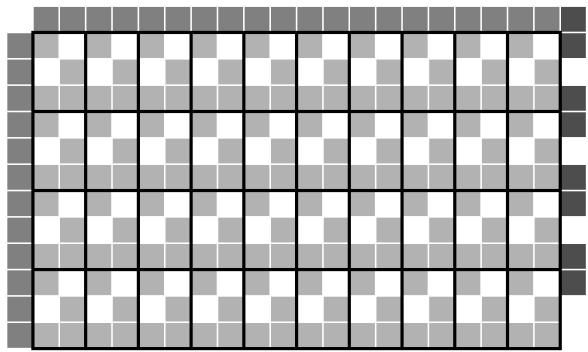
\begin{figure}[ht]
\centering

\begin{tikzpicture} [scale=.6] 
\foreach \x/\y in {32 / 5, 32 / 6, 33 / 7, 32 / 8, 33 / 8, 32 / 9, 34 / 9, 33 / 10, 34 / 10, 32 / 11, 33 / 11, 32 / 12, 34 / 12, 33 / 13, 34 / 13, 32 / 14, 33 / 14, 32 / 15, 34 / 15, 33 / 16, 34 / 16, 32 / 17, 33 / 17, 32 / 18 } { 
\path [draw=gray!60!black, fill=gray!60!black] (\x-0.45, .5+\y-0.45)
-- ++(0,.9)
-- ++(.9,0)
-- ++(0,-.9)
--cycle;
}

\foreach \x/\y in {0 / 1, 1 / 1, 2 / 1, 3 / 1, 4 / 1, 5 / 1, 6 / 1, 7 / 1, 8 / 1, 9 / 1, 10 / 1, 11 / 1, 12 / 1, 13 / 1, 14 / 1, 15 / 1, 16 / 1, 17 / 1, 18 / 1, 19 / 1, 20 / 1, 21 / 1, 22 / 1, 23 / 1, 24 / 1, 25 / 1, 26 / 1, 27 / 1, 28 / 1, 29 / 1, 30 / 1, 0 / 2, 2 / 2, 4 / 2, 6 / 2, 8 / 2, 10 / 2, 12 / 2, 14 / 2, 16 / 2, 18 / 2, 20 / 2, 22 / 2, 24 / 2, 26 / 2, 28 / 2, 30 / 2, 0 / 3, 1 / 3, 3 / 3, 5 / 3, 7 / 3, 9 / 3, 11 / 3, 13 / 3, 15 / 3, 17 / 3, 19 / 3, 21 / 3, 23 / 3, 25 / 3, 27 / 3, 29 / 3, 31 / 3, 0 / 4, 1 / 4, 2 / 4, 3 / 4, 4 / 4, 5 / 4, 6 / 4, 7 / 4, 8 / 4, 9 / 4, 10 / 4, 11 / 4, 12 / 4, 13 / 4, 14 / 4, 15 / 4, 16 / 4, 17 / 4, 18 / 4, 19 / 4, 20 / 4, 21 / 4, 22 / 4, 23 / 4, 24 / 4, 25 / 4, 26 / 4, 27 / 4, 28 / 4, 29 / 4, 30 / 4, 31 / 4, 0 / 5, 2 / 5, 4 / 5, 6 / 5, 8 / 5, 10 / 5, 12 / 5, 14 / 5, 16 / 5, 18 / 5, 20 / 5, 22 / 5, 24 / 5, 26 / 5, 28 / 5, 30 / 5, 0 / 6, 1 / 6, 3 / 6, 5 / 6, 7 / 6, 9 / 6, 11 / 6, 13 / 6, 15 / 6, 17 / 6, 19 / 6, 21 / 6, 23 / 6, 25 / 6, 27 / 6, 29 / 6, 31 / 6, 0 / 7, 1 / 7, 2 / 7, 3 / 7, 4 / 7, 5 / 7, 6 / 7, 7 / 7, 8 / 7, 9 / 7, 10 / 7, 11 / 7, 12 / 7, 13 / 7, 14 / 7, 15 / 7, 16 / 7, 17 / 7, 18 / 7, 19 / 7, 20 / 7, 21 / 7, 22 / 7, 23 / 7, 24 / 7, 25 / 7, 26 / 7, 27 / 7, 28 / 7, 29 / 7, 30 / 7, 31 / 7, 0 / 8, 2 / 8, 4 / 8, 6 / 8, 8 / 8, 10 / 8, 12 / 8, 14 / 8, 16 / 8, 18 / 8, 20 / 8, 22 / 8, 24 / 8, 26 / 8, 28 / 8, 30 / 8, 0 / 9, 1 / 9, 3 / 9, 5 / 9, 7 / 9, 9 / 9, 11 / 9, 13 / 9, 15 / 9, 17 / 9, 19 / 9, 21 / 9, 23 / 9, 25 / 9, 27 / 9, 29 / 9, 31 / 9, 0 / 10, 1 / 10, 2 / 10, 3 / 10, 4 / 10, 5 / 10, 6 / 10, 7 / 10, 8 / 10, 9 / 10, 10 / 10, 11 / 10, 12 / 10, 13 / 10, 14 / 10, 15 / 10, 16 / 10, 17 / 10, 18 / 10, 19 / 10, 20 / 10, 21 / 10, 22 / 10, 23 / 10, 24 / 10, 25 / 10, 26 / 10, 27 / 10, 28 / 10, 29 / 10, 30 / 10, 31 / 10, 0 / 11, 2 / 11, 4 / 11, 6 / 11, 8 / 11, 10 / 11, 12 / 11, 14 / 11, 16 / 11, 18 / 11, 20 / 11, 22 / 11, 24 / 11, 26 / 11, 28 / 11, 30 / 11, 0 / 12, 1 / 12, 3 / 12, 5 / 12, 7 / 12, 9 / 12, 11 / 12, 13 / 12, 15 / 12, 17 / 12, 19 / 12, 21 / 12, 23 / 12, 25 / 12, 27 / 12, 29 / 12, 31 / 12, 0 / 13, 1 / 13, 2 / 13, 3 / 13, 4 / 13, 5 / 13, 6 / 13, 7 / 13, 8 / 13, 9 / 13, 10 / 13, 11 / 13, 12 / 13, 13 / 13, 14 / 13, 15 / 13, 16 / 13, 17 / 13, 18 / 13, 19 / 13, 20 / 13, 21 / 13, 22 / 13, 23 / 13, 24 / 13, 25 / 13, 26 / 13, 27 / 13, 28 / 13, 29 / 13, 30 / 13, 31 / 13, 0 / 14, 2 / 14, 4 / 14, 6 / 14, 8 / 14, 10 / 14, 12 / 14, 14 / 14, 16 / 14, 18 / 14, 20 / 14, 22 / 14, 24 / 14, 26 / 14, 28 / 14, 30 / 14, 0 / 15, 1 / 15, 3 / 15, 5 / 15, 7 / 15, 9 / 15, 11 / 15, 13 / 15, 15 / 15, 17 / 15, 19 / 15, 21 / 15, 23 / 15, 25 / 15, 27 / 15, 29 / 15, 31 / 15, 0 / 16, 1 / 16, 2 / 16, 3 / 16, 4 / 16, 5 / 16, 6 / 16, 7 / 16, 8 / 16, 9 / 16, 10 / 16, 11 / 16, 12 / 16, 13 / 16, 14 / 16, 15 / 16, 16 / 16, 17 / 16, 18 / 16, 19 / 16, 20 / 16, 21 / 16, 22 / 16, 23 / 16, 24 / 16, 25 / 16, 26 / 16, 27 / 16, 28 / 16, 29 / 16, 30 / 16, 31 / 16, 0 / 17, 2 / 17, 4 / 17, 6 / 17, 8 / 17, 10 / 17, 12 / 17, 14 / 17, 16 / 17, 18 / 17, 20 / 17, 22 / 17, 24 / 17, 26 / 17, 28 / 17, 30 / 17, 0 / 18, 1 / 18, 3 / 18, 5 / 18, 7 / 18, 9 / 18, 11 / 18, 13 / 18, 15 / 18, 17 / 18, 19 / 18, 21 / 18, 23 / 18, 25 / 18, 27 / 18, 29 / 18, 31 / 18, 1 / 19, 2 / 19, 3 / 19, 4 / 19, 5 / 19, 6 / 19, 7 / 19, 8 / 19, 9 / 19, 10 / 19, 11 / 19, 12 / 19, 13 / 19, 14 / 19, 15 / 19, 16 / 19, 17 / 19, 18 / 19, 19 / 19, 20 / 19, 21 / 19, 22 / 19, 23 / 19, 24 / 19, 25 / 19, 26 / 19, 27 / 19, 28 / 19, 29 / 19, 30 / 19, 31 / 19} { 
\path [draw=gray, fill=gray] (\x-0.45, .5+\y-0.45)
-- ++(0,.9)
-- ++(.9,0)
-- ++(0,-.9)
--cycle;
}

 \end{tikzpicture} 
\caption{$R_3$ has $420$ tiles and $180$ holes. Adding tiles in a domino pattern on the right side yields  $R_{3,24}$, which has $444$ tiles and $192$ holes.}
 \label{m1}
\end{figure}
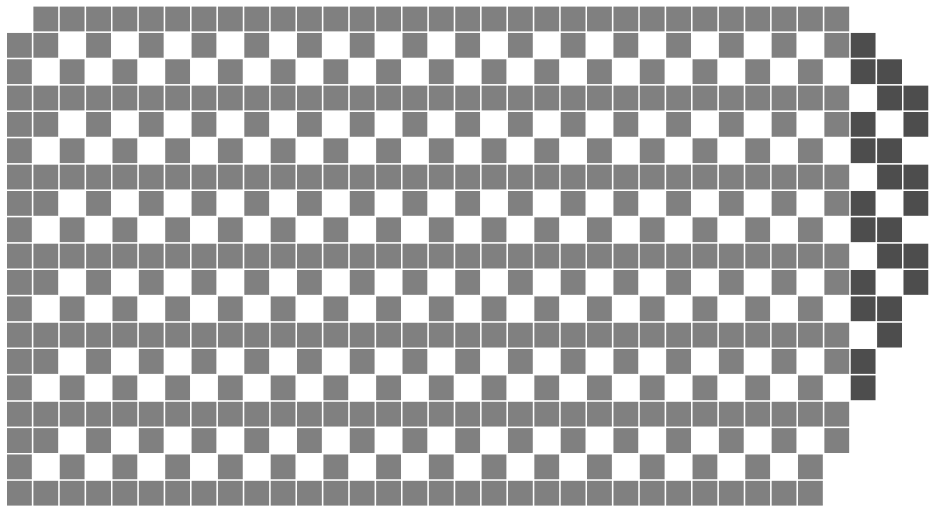

\section{Concluding remarks}

Theorem \ref{Ttremebundoexactly} makes us guess that the upper bound in Theorem \ref{Ttremebundo} is correct, and that the lower bound might be improved to the following.

\begin{conjecture} For every $C_{1}>\sqrt{3 / 2}$ there exists an $n_0=n_0(C_1)$ such that
$$f(n) \ge \frac{1}{2} n - C_1 \sqrt{n}.$$
for all $n \ge n_0$. \end{conjecture}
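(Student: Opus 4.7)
The plan is to improve the $R_k$ construction of Section~\ref{Sgeneralb} by using a nearly-square region instead of a $3:5$ rectangle. The upper bound from Lemma~\ref{LTupperbound} can be unpacked as $\frac{n}{2} - f(n) \geq \frac{p_{\min}(n + f(n))}{4} - O(1)$, and since $f(n) \approx n/2$ we have $n + f(n) \approx 3n/2$, so the deficit is at least approximately $\sqrt{3n/2}$. This bound is tight exactly when the combined tile-plus-hole region is approximately square and the interior density is $2/3$ tiles to $1/3$ holes; both hold for the fractal $S_k$ but not for the rectangular $R_k$, where the $3:5$ aspect ratio is responsible for the suboptimal constant $\sqrt{5/2}$ (essentially the ratio $\frac{a+b}{2\sqrt{ab}}$, minimized by AM--GM at $a=b$).

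Concretely, I would construct a family $Q_N$ of polyominoes whose combined region fills an approximately $N \times N$ square, by tiling the interior with pattern $S$ (which has $4$ tiles and $2$ holes per $2 \times 3$ block) and handling the boundary with a deficit-neutral sealing, in the spirit of the right-side domino extensions already used in the proof of Theorem~\ref{Ttremebundo}. Such a $Q_N$ would have $n \approx \tfrac{2}{3} N^2 + O(N)$ tiles, $t \approx \tfrac{1}{3} N^2 + O(N)$ holes, and deficit $\tfrac{n}{2} - t \approx N \approx \sqrt{3n/2}$, matching the conjectured constant. To cover intermediate values of $n$, I would interpolate between consecutive $Q_N$ and $Q_{N+1}$---a gap of $\Theta(\sqrt{n})$ tiles---using the same deficit-neutral domino trick, where each pair of appended tiles creates one new hole and so leaves the deficit unchanged up to an additive $O(1)$.

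The main obstacle will be designing a boundary for $Q_N$ that is truly deficit-neutral. A naive sealing with all-tile top, left, and right rows/columns adds $\Theta(\sqrt{n})$ tiles without creating new holes, producing a deficit with a coefficient strictly larger than $\sqrt{3/2}$. Possible remedies include arranging domino-style sealings along all four sides, so that each pair of boundary tiles closes an exposed pattern-$S$ hole while creating a fresh hole beside it; replacing pattern $S$ with a more symmetric periodic pattern whose boundary encloses holes automatically; or using the fractal $S_k$ itself as the interior of the square, since it already achieves the optimal ratio on its own, and interpolating between $n_k$ and $n_{k+1}$ by modifying the rotation-and-gluing construction of $S_{k+1}$ from four copies of $S_k$ (for instance by deleting or replacing a single $S_{k-1}$ sub-block). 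The upper bound forces the constant $\sqrt{3/2}$, so the challenge is not to identify the correct answer but to exhibit a sufficiently flexible construction that realizes it for every large $n$.
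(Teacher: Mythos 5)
This statement is a \emph{conjecture} in the paper: the authors prove only the weaker lower bound with constant $\sqrt{5/2}$ (Theorem~\ref{Ttremebundo}) and explicitly leave the improvement to $\sqrt{3/2}$ open, so there is no proof here to compare yours against, and what you have written is a research plan rather than a proof. The decisive step --- an explicit family of polyominoes, for \emph{every} large $n$, whose ``boundary sealing'' costs only $(1+o(1))\sqrt{3n/2}$ in the deficit $\frac{n}{2}-h$, together with a verified tile/hole count and an interpolation in $O(1)$-tile steps between consecutive members --- is exactly what you defer to ``possible remedies include \dots''. That missing construction is the entire content of the conjecture; the upper bound of Corollary~\ref{Cublb} only tells you that $\sqrt{3/2}$ cannot be beaten, not that it can be attained.

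Your diagnosis of where $\sqrt{5/2}$ comes from is also not right, and this matters for your plan. In $R_k$ the four sides of the rectangle have \emph{unequal} per-unit-length deficit costs: the bottom is sealed for free by the solid bottom edge of pattern $S$, the top row and the left column cost $1/2$ per unit length (tiles creating no holes), and the right-hand dominoes cost $1/3$. For an $H\times W$ region filled with $S$ the deficit is about $\frac{5}{6}H+\frac{1}{2}W$ subject to $HW\approx\frac{3}{2}n$, and AM--GM gives the minimum $\sqrt{5n/2}$ precisely at $H:W=3:5$ --- i.e.\ the paper's $6k\times 10k$ rectangle is already the optimal aspect ratio for this pattern and sealing, while a square with the same sealing gives $\frac{4}{3}\sqrt{\frac{3}{2}n}=\sqrt{\frac{8}{3}n}\approx 1.63\sqrt{n}$, which is \emph{worse} than $\sqrt{5/2}\sqrt{n}\approx 1.58\sqrt{n}$. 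To reach $\sqrt{3/2}$ you must drive the total boundary cost down to about $\sqrt{3n/2}$, i.e.\ an average of $1/4$ per unit of perimeter (for instance two solid ``free'' sides plus two sides at cost $1/2$, say by reflecting the pattern so that both the bottom and the right edges are solid), and it is precisely the existence of such a scheme --- with the seams between reflected copies checked, all boundary holes actually enclosed, and intermediate values of $n$ reachable --- that remains unproved. Your alternative remedy of interpolating between the $S_k$ by deleting or replacing an $S_{k-1}$ sub-block cannot work as stated: consecutive $n_k$ differ by a factor of about $4$, and removing a sub-block changes $n$ by $\Theta(n)$, far coarser than the $O(\sqrt{n})$ slack the conjecture allows.
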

It might even be possible to find an exact formula for $f$. The sequence of polyominoes $\{ S_k \}$ shows that
$$f(n) =\frac{1}{2} n - \sqrt{ \frac{3}{2} n +\frac{1}{4}} + \frac{1}{2},$$
infinitely often.

The recursive construction suggests that the main sequence of polyominoes $\{ S_k \}$ is approaching some limiting fractal shape. Elliot Paquette pointed out to us that one way to make sense of this idea is to consider the ``inner boundary'' of a polyomino $S_n$ to be an immersed circle in $\mathbb{R}^2$. Appropriately rescaling and reparameterizing, these circles seem to converge to a space-filling curve in $[0,1]^2$.

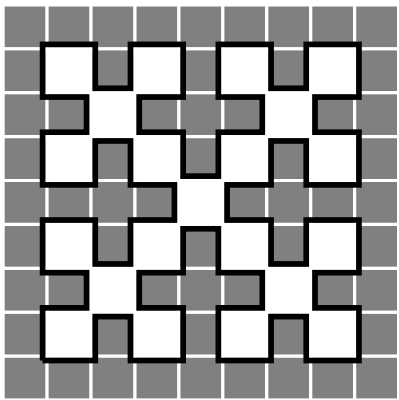
\begin{figure}[ht]
\centering
\begin{tikzpicture} 
\foreach \x/\y in {0 / 0, 0 / 1, 1 / 0, 2 / 0, 2 / 1, 0 / 2, 1 / 2, 4 / 0, 4 / 1, 3 / 0, 4 / 2, 3 / 2, 0 / 4, 0 / 3, 1 / 4, 2 / 4, 2 / 3, 4 / 3, 3 / 4, 8 / 0, 8 / 1, 7 / 0, 6 / 0, 6 / 1, 8 / 2, 7 / 2, 5 / 0, 5 / 2, 8 / 4, 8 / 3, 7 / 4, 6 / 4, 6 / 3, 5 / 4, 0 / 8, 0 / 7, 1 / 8, 2 / 8, 2 / 7, 0 / 6, 1 / 6, 4 / 8, 4 / 7, 3 / 8, 4 / 6, 3 / 6, 0 / 5, 2 / 5, 4 / 5, 8 / 8, 8 / 7, 7 / 8, 6 / 8, 6 / 7, 8 / 6, 7 / 6, 5 / 8, 5 / 6, 8 / 5, 6 / 5 } { 
\path [draw=gray, fill=gray] (\x+0.05, \y+0.05)
-- ++(0,.9)
-- ++(.9,0)
-- ++(0,-.9)
--cycle;
}

\draw [line width=0.75mm](0.9,0.9)--(2.1,0.9)--(2.1,1.9)--(2.9,1.9)--(2.9,0.9)--(4.1,0.9)--(4.1,2.1)--(3.1,2.1)--(3.1,2.9)--(4.1,2.9)--(4.1,3.9)--(4.9,3.9)--(4.9,2.9)--(5.9,2.9)--(5.9,2.1)--(4.9,2.1)--(4.9,0.9)--(6.1,0.9)--(6.1,1.9)--(6.9,1.9)--(6.9,0.9)--(8.1,0.9)--(8.1,2.1)--(7.1,2.1)--(7.1,2.9)--(8.1,2.9)--(8.1,4.1)--(6.9,4.1)--(6.9,3.1)--(6.1,3.1)--(6.1,4.1)--(5.1,4.1)--(5.1,4.9)--(6.1,4.9)--(6.1,5.9)--(6.9,5.9)--(6.9,4.9)--(8.1,4.9)--(8.1,6.1)--(7.1,6.1)--(7.1,6.9)--(8.1,6.9)--(8.1,8.1)--(6.9,8.1)--(6.9,7.1)--(6.1,7.1)--(6.1,8.1)--(4.9,8.1)--(4.9,6.9)--(5.9,6.9)--(5.9,6.1)--(4.9,6.1)--(4.9,5.1)--(4.1,5.1)--(4.1,6.1)--(3.1,6.1)--(3.1,6.9)--(4.1,6.9)--(4.1,8.1)--(2.9,8.1)--(2.9,7.1)--(2.1,7.1)--(2.1,8.1)--(0.9,8.1)--(0.9,6.9)--(1.9,6.9)--(1.9,6.1)--(0.9,6.1)--(0.9,4.9)--(2.1,4.9)--(2.1,5.9)--(2.9,5.9)--(2.9,4.9)--(3.9,4.9)--(3.9,4.1)--(2.9,4.1)--(2.9,3.1)--(2.1,3.1)--(2.1,4.1)--(0.9,4.1)--(0.9,2.9)--(1.9,2.9)--(1.9,2.1)--(0.9,2.1)--(0.9,0.9);
 \end{tikzpicture}

\caption{The inner boundary is an immersed circle.}
\label{fig:circ}
\end{figure}

We noticed also that one can derive an aperiodic tiling of the plane from our main sequence, as follows. The `planar dual' of a polyomino $S_k$ is a planar graph with one vertex for every tile and one bounded face for every hole---see Figure \ref{Tess}. One can find a limiting infinite polyomino by centering every $S_k$ at the origin, and taking the union of all of them. The planar dual of this infinite polyomino is an aperiodic tiling by squares, pentagons, and hexagons.  
\input{tessellation.txt}

\noindent

\section*{Acknowledgments.}
We thank CIMAT (Centro de Investigaci\'on en Matem\'aticas) for hosting us in Summer 2016 and ICERM (Institute for Computational and Experimental Research in Mathematics) for hosting us in Autumn 2016. This work was also supported in part by NSF DMS-1352386.


\bibliographystyle{plain}
\bibliography{bibliography}

\end{document}